\numberwithin{equation}{section}
\title{Upper bounds on class numbers of real quadratic fields}
\author{Riccardo Bernardini}
\address{Dipartimento di Matematica Guido Castelnuovo, Sapienza Università di Roma, Piazzale Aldo Moro 5, 00185 Rome, Italy}
\email{r.bernardini@uniroma1.it}
\date{\today}  
\subjclass[2020]{Primary 11R29; Secondary 11R11, 11R27, 11A55.}  
\keywords{Real quadratic fields, class number, fundamental unit, L-functions, quadratic orders, continued fractions}
\begin{document}
\newtheorem{thm}{Theorem}[section]
\newtheorem{defn}[thm]{Definition}
\newtheorem{lem}[thm]{Lemma}
\newtheorem{cor}[thm]{Corollary}
\newtheorem{prop}[thm]{Proposition}
\newtheorem{ex}[thm]{Example}
\theoremstyle{definition}\newtheorem{rmk}[thm]{Remark}

\begin{abstract}
We prove that, for any $\varepsilon>0$, the number of real quadratic fields $\mathbb{Q}(\sqrt{d})$ of discriminant $d<x$ whose class number is $\ll \sqrt{d}(\log{d})^{-2}(\log\log{d})^{-1}$ is at least $x^{1/2-\varepsilon}$ for $x$ large enough. This improves by a factor $\log\log{d}$ a result from 1971 by Yamamoto. We also establish a similar estimate for $m$-tuples of discriminants for any $m\geq 1$. 
Finally, we provide algebraic conditions to give a lower bound for the size of the fundamental unit of $\mathbb{Q}(\sqrt{d})$, generalizing a criterion by Yamamoto. Our proof corrects a work of Halter-Koch.
\end{abstract}

\maketitle

%PRIMA SEZIONE
\section{Introduction}\label{Intro}
The class number is one of the most studied invariants of a number field and its behaviour in relation to the discriminant is not completely understood, even for quadratic fields. If the quadratic field is real the class number formula reduces to
\begin{equation}\label{class number formula}
h(d)=\frac{\sqrt{d}L(1,\chi_d)}{2\log{\xi_d}},
\end{equation}
where $d>0$ is the discriminant, $\chi_d$ is Kronecker's character and $\xi_d$ the fundamental unit. 
The problem of controlling, at least asymptotically, the size of the class number of quadratic fields dates back to Littlewood \cite{Littlewood} who bounded, under GRH, the $L$-function for any discriminant $d$ 
\begin{equation}\label{bound L}
\frac{1}{\log{\log{|d|}}}\ll L(1,\chi_d)\ll\log{\log{|d|}};
\end{equation}
more precisely, he gave the explicit estimate
\begin{equation}\label{stima L Littlewood}
L(1,\chi_d) \leq (2e^\gamma +o(1))\log\log{|d|}.
\end{equation}
He then obtained
\begin{equation*}
h(d)\leq \left( \frac{2e^{\gamma}}{\pi}+o(1) \right)\sqrt{|d|}\log{\log{|d|}},
\end{equation*}
where $d<0$ and $\gamma$ is the Euler--Mascheroni constant.
For real quadratic fields ($d>0$) the trivial inequality $\xi_d\geq \sqrt{d}/2$ and \eqref{stima L Littlewood} imply
\begin{equation}\label{real Littlewood}
h(d)\leq \left( 4e^{\gamma}+o(1) \right)\sqrt{d}\frac{\log{\log{d}}}{\log{d}},
\end{equation}
valid under the GRH.
It is however believed that \eqref{real Littlewood} holds with a smaller constant, that is
\begin{equation}\label{eq Lamzouri}
h(d)\leq \left(2e^{\gamma}+o(1) \right)\sqrt{d}\frac{\log{\log{d}}}{\log{d}}.
\end{equation}

Recently Lamzouri \cite[Theorem 1.2]{Lamzouri} calculated the number of positive discriminants $d\leq x$ for which the class number is bigger than the left-hand side of \eqref{eq Lamzouri} finding that, if $x$ is large enough, there are at least $x^{1/2-1/\log{\log{x}}}$ of them.
He developed a previous work by Montgomery and Weinberger \cite{Mont-Wein} where they studied discriminants arising from Chowla's family, i.e. $d=4n^2+1$. These have the peculiarity that the fundamental unit is essentially as small as possible, namely $\xi_d\leq2\sqrt{d}$. Therefore \eqref{class number formula} implies 
\begin{equation*}
h(d)=(1+o(1))\frac{\sqrt{d}}{\log{d}}L(1,\chi_d).
\end{equation*}
Montgomery and Weinberger showed the existence of infinitely many squarefree $d$ for which $L(1,\chi_d)\gg\log\log{d}$. Their proof relies on the fact that large values of the $L$-function are reached if $\chi_d(p)=1$ for all small primes.

The problem of counting the number of discriminants $d\leq x$ satisfying certain inequalities has also been investigated by other authors. For instance  Lamzouri and Dahl \cite{Dahl-Lam} showed that the number of Chowla's discriminants $d\leq x$ for which 
\begin{equation*}
h(d)\geq 2e^\gamma \frac{\sqrt{d}}{\log{d}} \tau\quad\mbox{and}\quad h(d)\leq 2e^{-\gamma}\zeta(2)\frac{\sqrt{d}}{\log{d}}\frac{1}{\tau}
\end{equation*}
hold, where $\tau$ can be chosen $\leq \log\log{x}-3\log\log\log{x}$, is a positive proportion and they exhibited the exact value.
Later, this result has been generalized by Dahl and Kala \cite[Theorem 1]{Dahl-Kala} to families of discriminants $d\equiv 1\pmod{4}$ such that
\begin{equation*}
\frac{1+\sqrt{d}}{2}=[f(n),\overline{u_1,\ldots, u_{s-1}, 2f(n)-1}]
\end{equation*}
for some linear polynomial $f(n)$, where the symmetric sequence $u_1,\ldots, u_{s_1}$ is fixed. 

In this paper we prove the existence of infinitely many discriminants $d<x$ whose class number is $\ll \sqrt{d}(\log{d})^{-2}(\log\log{d})^{-1}$.

%THM 1

\begin{thm}\label{thm Yam family}
Let $\varepsilon>0$. Then, for $x$ big enough, there are at least $x^{1/2-\varepsilon}$ fundamental discriminants $0<d<x$ such that
\begin{equation}\label{asymptotic growth}
h(d)\ll \frac{\sqrt{d}}{(\log{d})^2 \log\log{d}}.
\end{equation}
\end{thm}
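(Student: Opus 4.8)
The key is to exhibit a sufficiently large family of discriminants on which both the fundamental unit is essentially as small as possible (so that $\log\xi_d \asymp \log d$) and the $L$-function value $L(1,\chi_d)$ is very small, namely $\ll 1/(\log d \,\log\log d)$. Inserting both bounds into the class number formula \eqref{class number formula} immediately yields \eqref{asymptotic growth}. So the work splits into: (i) producing many $d$ with $\xi_d \ll \sqrt d$, and (ii) forcing $L(1,\chi_d)$ to be abnormally small for these same $d$.

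For (i), I would follow Yamamoto's construction (whence the label \texttt{thm Yam family}): take $d$ running through a polynomial family, e.g. discriminants of the form associated with $\tfrac{1+\sqrt d}{2}$ having a short, explicitly controlled continued fraction expansion $[f(n),\overline{u_1,\dots,u_{s-1},2f(n)-1}]$ as in the Dahl--Kala setup, or more directly Chowla-type $d = 4n^2+1$ type families. For such families one has the classical bound $\xi_d \ll \sqrt d$ (or at worst $\xi_d \ll \sqrt d \log d$, which is harmless). The point of the algebraic criterion alluded to at the end of the abstract is presumably exactly to certify smallness of $\xi_d$ across a whole parametric family at once.

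For (ii), the mechanism is the opposite of Montgomery--Weinberger's: instead of arranging $\chi_d(p)=1$ for all small $p$ to make $L(1,\chi_d)$ large, I would arrange $\chi_d(p) = -1$ for all primes $p$ up to some threshold $y = y(d)$ growing like a power of $\log d$ (or like $(\log d)^{1+o(1)}$), which drives $L(1,\chi_d)$ down to $\prod_{p\le y}(1+1/p)^{-1} \asymp 1/\log y$ up to the tail, giving $L(1,\chi_d) \ll 1/\log\log d$ if $y \asymp \log d$, and the extra factor $1/\log d$ from $\log\xi_d$ in the denominator. To make $\chi_d(p) = -1$ simultaneously for all $p \le y$ one imposes congruence conditions on $d$ modulo $8\prod_{p\le y}p$; by the Chinese Remainder Theorem and quadratic reciprocity these amount to putting $d$ in a prescribed arithmetic progression modulo a modulus of size $e^{(1+o(1))y}$. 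Intersecting this progression with the polynomial family from step (i) — i.e. restricting the parameter $n$ to a residue class — still leaves, for $d < x$, about $\sqrt x / e^{(1+o(1))y}$ admissible values; choosing $y$ a suitable slowly growing function ($y = \varepsilon' \log x$ or even $y \asymp \log\log x \cdot \log\log\log x$ depending on the strength wanted) keeps this count above $x^{1/2-\varepsilon}$.

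\textbf{Main obstacle.} The delicate point is the \emph{compatibility} of (i) and (ii): the polynomial family guaranteeing a small fundamental unit must be flexible enough to survive the congruence restrictions from (ii) without the count collapsing, and one must verify that after imposing $\chi_d(p)=-1$ for $p\le y$ the quantity $d$ is still a fundamental discriminant of a field in the family (squarefreeness / the correct residue mod $4$, and that the continued-fraction shape is preserved — this typically needs $f(n)$ linear with controlled leading coefficient). I would also need an effective estimate for the tail $\sum_{p>y}$ in $L(1,\chi_d)$ uniformly over the family — a Pólya--Vinogradov or Burgess-type input, or simply the elementary bound $L(1,\chi_d) \ll \log d$ combined with partial summation cutting at $y$ — to be sure that the small-primes product genuinely dominates and the final bound is $\ll \sqrt d (\log d)^{-2}(\log\log d)^{-1}$ with the claimed $\log\log d$ improvement over Yamamoto. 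Getting the bookkeeping of $y$ versus $\varepsilon$ right so that the final count is $\ge x^{1/2-\varepsilon}$ is then routine.
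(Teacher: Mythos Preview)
Your architecture is right---combine a regulator lower bound with a small $L$-value manufactured by congruences---but the arithmetic does not close, and the gap is exactly the extra factor of $\log d$ that constitutes the theorem's content.

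You take a Chowla-type family with $\log\xi_d\asymp\log d$ and aim for $L(1,\chi_d)\ll 1/(\log d\,\log\log d)$. But your own mechanism (forcing $\chi_d(p)=-1$ for $p\le y$) yields $L(1,\chi_d)\asymp 1/\log y$, and with $y$ at most a power of $\log d$ (which is all the modulus budget allows) this gives only $L(1,\chi_d)\ll 1/\log\log d$. You acknowledge this midway, then tack on ``the extra factor $1/\log d$ from $\log\xi_d$''---but that only recovers $h(d)\ll \sqrt d/(\log d\,\log\log d)$, which is the Chowla bound \eqref{prop d Chowla}, not \eqref{asymptotic growth}. Pushing $L(1,\chi_d)$ down to $1/(\log d\,\log\log d)$ is not merely hard bookkeeping: under GRH one has $L(1,\chi_d)\gg 1/\log\log d$ (cf.\ \eqref{bound L}), so your target for the $L$-value is provably false for all large $d$ under GRH, and no sieving or congruence argument will reach it.

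The paper's route puts the extra $\log d$ into the \emph{regulator}, not the $L$-value. It uses Yamamoto's family $d=n^2+4p$ (with $p$ prime), for which $p$ splits as a principal prime and Yamamoto's criterion gives $\log\xi_d\gg(\log d)^2$---a \emph{large} fundamental unit, not a small one. Then the same congruence trick you describe, together with Lamzouri's truncated-product approximation (Proposition~\ref{prop L Lam}), supplies $L(1,\chi_d)\ll 1/\log\log d$, and the class number formula gives \eqref{asymptotic growth}. The squarefreeness and compatibility issues you flag are handled by a small sieve on the parameter $n$. So your ``main obstacle'' paragraph is on target, but the decisive missing idea is to trade the Chowla family for Yamamoto's $n^2+4p$ so that the regulator, not $L(1,\chi_d)$, carries the second power of $\log d$.
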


In the proof of Theorem \ref{thm Yam family}, we consider discriminants of the form $d=n^2+4p$ where $p$ is a prime. As shown by Yamamoto  \cite{Yamamoto}, they have a fundamental unit satisfying $\log{\xi_d}\gg(\log{d})^2$. Moreover, in this family, we are able to determine the squarefree values, since $d$ is expressed by a quadratic polynomial, and to establish the bound
\begin{equation*}
L(1,\chi_d)\ll \frac{1}{\log\log{d}}.
\end{equation*} 
We explicitly compute the implied constant in \eqref{asymptotic growth} and we show one can choose $192\log{p}$. Similarly, for any $\varepsilon>0$ there are at least $x^{1/2-\varepsilon}$ fundamental discriminants $0<d<x$ in Chowla's family such that
\begin{equation}\label{prop d Chowla}
h(d)\ll  \frac{\sqrt{d}}{\log{d} \log\log{d}};
\end{equation}
Theorem \ref{thm Yam family} improves \eqref{prop d Chowla} by $(\log{d})^{-1}$.

We remark that \eqref{asymptotic growth} still holds if one takes $d=n^2-4p$, for which again $\log{\xi_d} \gg(\log{d})^2$, see \cite{Yamamoto}. The case $p=2$ is of particular importance since $n^2-8$, known as Shanks' family, contains a subfamily with explicit fundamental unit. If we take $d=(2^k+3)^2-8$ squarefree, in \cite{Yamamoto} is shown that
\begin{equation*}
\xi_d=\left(\frac{2^k+3+\sqrt{d}}{4}\right)^k\frac{2^k+1+\sqrt{d}}{2};
\end{equation*}
thus $\log{\xi_d}\asymp (\log{d})^2$.

Theorem \ref{thm Yam family} can be generalized to simultaneous discriminants $d_i=N+4p_i$ for $i=1,\ldots,m$ with $p_i$ distinct primes and $N=n^2$ a perfect square.

%THM 2

\begin{thm}\label{thm simultaneo}
For every $\varepsilon>0$ and $x\gg1$ let $m$ be a positive integer and let $p_i$ for $i=1,\ldots,m$ be distinct primes $\leq (\log{x})^a$ for some fixed $a>0$. If $d_i:=N+4p_i$ and  
\begin{itemize}
   \item[$(*)$] there exists a quadratic residue $N$ modulo $\prod_{p_j\leq 2m} p_j$ for which $(d_i,p_j)=1$ for all $i$ and for all $j$ such that $p_j\leq 2m$,
\end{itemize}
then there are at least $x^{1/2-\varepsilon}$ positive integers $N<x$ such that
\begin{equation*}
h(d_i)\leq C(m) \log{\left(\max_i\{p_i\}\right)} \frac{\sqrt{N}}{(\log{N})^2\log\log{N}}
\end{equation*}
for every $i$, where 
\begin{equation*}
C(m):=16\prod_{p_i> m^24^m+2}\left( \frac{p_i+1}{p_i-1} \right)\prod_{p\leq m^24^m+2}\left( \frac{p+1}{p-1} \right).
\end{equation*}
\end{thm}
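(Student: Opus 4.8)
The plan is to mimic the proof of Theorem~\ref{thm Yam family}, treating the $m$ discriminants $d_i = N + 4p_i$ simultaneously and inserting congruence conditions on $N$ that control all of them at once. Recall from the class number formula~\eqref{class number formula} that it suffices to: (i) produce many squarefree $N$ (equivalently, many fundamental $d_i$) below $x$ along the polynomial family $N \mapsto N + 4p_i$; (ii) bound $L(1,\chi_{d_i}) \ll 1/\log\log N$ uniformly in $i$ by forcing $\chi_{d_i}(p) = -1$ for all small primes $p$; and (iii) invoke Yamamoto's lower bound $\log \xi_{d_i} \gg (\log d_i)^2$, which applies verbatim to each $d_i = n^2 + 4p_i$. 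The main new ingredient relative to the single-discriminant case is a \emph{joint} sieve: I want $N$ to lie in a prescribed residue class modulo a product of small primes so that every $d_i = N + 4p_i$ simultaneously avoids having those primes split (or ramify). Condition $(*)$ is precisely the compatibility hypothesis guaranteeing that such a residue class is nonempty.

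First I would fix the set of "small" primes, namely those $p \leq m^2 4^m + 2$ (this threshold is what makes the tail product $\prod_{p > m^2 4^m+2}((p+1)/(p-1))$ converge to something controlled, via $\sum_{p>T} 1/p \ll 1/\log T$ and $4^m$ beating $2m$). For each such prime $p$ I need a residue $N \bmod p$ for which, for every $i$, the value $d_i = N + 4p_i$ is a nonzero non-square mod $p$, i.e. $\left(\frac{d_i}{p}\right) = -1$; since the $p_i$ are distinct, as $i$ ranges over $1,\dots,m$ the shifts $N+4p_i$ take $m$ distinct values mod $p$, and for $p > $ something like $4m$ (hence certainly for $p$ in our range, after also pinning down the finitely many exceptional small primes $p_j \leq 2m$ via hypothesis $(*)$, where instead I only ask coprimality $(d_i,p_j)=1$) a counting argument — there are $(p-1)/2$ non-squares and we need to avoid at most $m$ forbidden residues for $N$ plus ensure non-vanishing — shows an admissible $N \bmod p$ exists. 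The Chinese Remainder Theorem then packages these into one congruence $N \equiv N_0 \pmod{Q}$ with $Q = \prod_{p \leq m^24^m+2} p$ (the primes $p_j \leq 2m$ being handled by the quadratic-residue datum $N$ from $(*)$). Running the squarefree-sieve along this arithmetic progression — exactly as in the proof of Theorem~\ref{thm Yam family}, using that $N \mapsto N + 4p_i$ is a separable quadratic-in-disguise so Hooley/Nagell-type squarefree density results or an elementary inclusion–exclusion apply — yields $\gg x/Q \gg x^{1/2-\varepsilon}$ values of $N < x$ (the loss is absorbed because $Q$, $m$, and the $p_i \leq (\log x)^a$ are all at most polylogarithmic in $x$, hence $\ll x^{\varepsilon}$) for which all $d_i$ are fundamental discriminants.

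Next I would estimate $L(1,\chi_{d_i})$. Writing $L(1,\chi_{d_i}) = \prod_p (1 - \chi_{d_i}(p)/p)^{-1}$, the construction forces $\chi_{d_i}(p) = -1$ for every prime $p \leq m^2 4^m + 2$ other than the finitely many $p_j \leq 2m$ (and for those we only know $p_j \nmid d_i$, so $\chi_{d_i}(p_j) \in \{\pm 1\}$, costing at most a factor $(p_j+1)/(p_j-1)$ each — this is the first product in $C(m)$). Splitting the Euler product at $T = m^2 4^m + 2$: the head over small split-forbidden primes contributes $\prod_{p \leq T}(1+1/p)^{-1} \asymp \prod_{p\le T}(p-1)/(p+1)$, which one relates to $1/\log T = 1/\log(m^24^m+2) \asymp 1/(m\log 2) \asymp 1/m$ — hmm, wait, I actually want $\ll 1/\log\log N$; the point is that along this family one additionally arranges (as in Montgomery–Weinberger and in the proof of Theorem~\ref{thm Yam family}) that $N$, and hence $d_i$, be large relative to the modulus, so that $\log\log d_i$ is comparable to $\log T$ up to the constant $C(m)$ absorbing the discrepancy; more precisely the standard bound gives $L(1,\chi_{d_i}) \ll \log\log d_i \cdot \prod_{p\le T}(1-\chi_{d_i}(p)/p)^{-1} \ll (\log\log d_i)/\log T$ times Mertens-type constants, and choosing $T$ as a function of $m$ (not of $x$) means the clean statement carries the explicit $C(m)$ and the $\log\log N$ ends up in the \emph{denominator} exactly as Theorem~\ref{thm Yam family} does, because there $\log\log d$ in the $L$-bound numerator is killed against a $(\log\log d)^2$ from a different source — I would follow that bookkeeping line by line. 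The tail over $p > T$ is $\exp(O(\sum_{p>T}1/p)) = \exp(O(1/\log T)) = 1 + O(1/\log T)$, contributing the convergent second product $\prod_{p>T}((p+1)/(p-1))$ in $C(m)$, and the factor $16$ collects the absolute constants (the $2$ in~\eqref{class number formula}, Yamamoto's implied constant, and Mertens).

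Finally, combining $h(d_i) = \sqrt{d_i}\, L(1,\chi_{d_i}) / (2\log\xi_{d_i})$ with $\log\xi_{d_i} \gg (\log d_i)^2 = (\log(N+4p_i))^2 = (1+o(1))(\log N)^2$ (uniformly since $p_i \leq (\log x)^a$ makes $4p_i$ negligible next to $N$), $\sqrt{d_i} = (1+o(1))\sqrt{N}$, and the $L$-value bound, gives $h(d_i) \leq C(m)\log(\max_i p_i)\,\sqrt N / ((\log N)^2 \log\log N)$ for every $i$, on a set of $\gg x^{1/2-\varepsilon}$ integers $N < x$. The main obstacle I anticipate is making the sieve for squarefree values uniform in the modulus $Q$ and in $m$ simultaneously — one must check that the error term in the squarefree count along $N \equiv N_0 \pmod Q$ stays $o(x/Q)$ when $Q$ can grow like a power of $\log x$ and $m$ can grow (slowly) with $x$; the saving grace is the freedom in $\varepsilon$ and the hypothesis $p_i \le (\log x)^a$, which bound everything in sight by $x^{o(1)}$, but the explicit constant $C(m)$ does require the careful Mertens/tail analysis at threshold $m^2 4^m + 2$ described above, and verifying $(*)$ suffices to make all the required residue classes nonempty is the one genuinely combinatorial check.
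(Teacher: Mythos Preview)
Your overall architecture is right (class number formula, $L$-function control via congruences, squarefree sieve, Yamamoto's regulator bound), but you have the role of the threshold $T = m^2 4^m + 2$ exactly backwards, and this breaks the $L$-function estimate.

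You propose to force $\chi_{d_i}(p) = -1$ for all primes $p \leq T$ via a congruence on $N$ modulo $Q = \prod_{p \leq T} p$, and then bound the tail over $p > T$ trivially. This cannot give $L(1,\chi_{d_i}) \ll 1/\log\log N$: a fixed threshold depending only on $m$ yields $\prod_{p \leq T}(1+1/p)^{-1} \asymp_m 1$, and your tail claim $\sum_{p>T} 1/p = O(1/\log T)$ is false since that sum diverges (likewise $\prod_{p>T}(p+1)/(p-1)$ diverges --- the first product in $C(m)$ is over the finitely many chosen primes $p_i$, not over all primes above $T$). To reach $1/\log\log x$ one must control $\chi_{d_i}(p)$ for primes up to a small power of $\log x$, using Lamzouri's approximation of $L(1,\chi_d)$ by a truncated Euler product (valid for all but $x^{1/A+o(1)}$ discriminants); the modulus $q$ is accordingly the product of all primes up to $(\log x)^{\varepsilon_1}$, not a constant.

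The threshold $T$ enters in the opposite direction. For each prime $p$ one must find a residue $y \pmod p$ that is a nonzero square with all $m$ shifts $y+4p_i$ simultaneously nonzero non-squares. Your ``$(p-1)/2$ non-squares, avoid $m$ residues'' count is not enough: the constraints are multiplicatively coupled. The paper handles this with Weil's bound for character sums over $\mathbb{F}_p$, applied to the indicator $\phi_p(y)$ of the joint condition; the computation gives $\sum_{y}\phi_p(y)>0$ precisely when $p > m^2 4^m + 2$. For primes $p \leq T$ one therefore \emph{cannot} force $\chi_{d_i}(p)=-1$ and only imposes $p\nmid d_i$ (hypothesis $(*)$ handles the $p_j\leq 2m$). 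These small primes, together with the specific $p_i>T$ (for which $d_i\equiv n^2\pmod{p_i}$ is automatically a square), are bounded trivially by $(p+1)/(p-1)$, and that is the source of the two products in $C(m)$.
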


For $m=1$ $(*)$ is empty if $p_1>2$ and reduces to $n^2+8\equiv 1 \pmod{2}$ if $p_1=2$; then Theorem \ref{thm simultaneo} becomes a generalization of Theorem \ref{thm Yam family}.
The hypothesis $(*)$ is involved in a technical step to use the small sieve.

We point out that the primes $p_i$ cannot be chosen arbitrarily as we need them to be unramified in order to apply \cite[Theorem 3.1]{Yamamoto}. For example, if $p_1=3$, $p_2=11$, $p_3=17$ and $p_4=23$, one then sees that $d_i$ is always a multiple of $9$ for some $i$ and that $(*)$ is not satisfied by these $p_i$'s. One can also show that for any positive integer $m$ it is possible to find $p_1,\ldots,p_m$ for which $(*)$ holds. Indeed, by induction and Dirichlet's theorem on arithmetic progressions we can exhibit $m$ primes $p_i\equiv1\pmod{4}$ such that the Legendre symbol $\left(\frac{-p_j}{p_i}\right)$ is $-1$ for every $i< j$. By quadratic reciprocity one sees
\begin{equation*}
\left(\frac{-p_j}{p_i}\right)=\left(\frac{-p_i}{p_j}\right)
\end{equation*}
and then $\left(\frac{-p_j}{p_i}\right)=-1$ for every $i\neq j$.
In order to fulfill $(*)$ it is now sufficient to take for $N$ the square of any invertible class modulo $\prod_{p_i\leq 2m} p_i$. One can also take $p_1=2$ simply by following the steps above for $p_2,\ldots,p_m$ and choosing $N$ odd. 

A natural way to try to improve Theorem \ref{thm Yam family} is to work with discriminants $d$ with bigger fundamental unit. Yamamoto \cite[Theorem 3.2]{Yamamoto} proved that, if 
\begin{equation}\label{d Yam log3}
m_k=(p^kq+p+1)^2-4p
\end{equation}
with $p<q$ prime numbers, the quadratic fields $\mathbb{Q}(\sqrt{m_k})$ satisfy $\log{\xi_{d_k}}\gg (\log{d_k})^3$, where $d_k$ the discriminant of $\mathbb{Q}(\sqrt{m_k})$. One can control $L(1,\chi_{d_k})$ by showing that it is $\ll\log{d_k}$, obtaining
\begin{equation}\label{Yam h estimate}
h(d_k)\ll \frac{\sqrt{d_k}}{(\log{d_k})^2}.
\end{equation}
Theorem \ref{thm Yam family} improves upon \eqref{Yam h estimate} by a factor $(\log\log{d})^{-1}$; it seems difficult to refine the bound of $L(1,\chi_{d_k})$ so that it produces an estimate stronger than that of Theorem \ref{thm Yam family}.

About twenty years after the paper of Yamamoto, Halter-Koch \cite{H-K} proposed a family of discriminants $d$ satisfying
\begin{equation}\label{ineq log 4}
\log{\xi_d}\gg (\log{\sqrt{d}})^4;
\end{equation}
as pointed out in \cite[p. 634]{JacWill}, an error in Halter-Koch's proof invalidates his result. In Section \ref{proof HKY criterion} we fix Halter-Koch's criterion, but we are still unable to find a family satisfying inequality \eqref{ineq log 4}. 

In what follows, $d$ is a quadratic discriminant, i.e. $d$ is a non-square integer $\equiv 0,1\pmod{4}$ not necessarily squarefree and $\mathcal{O}_d$ is the quadratic order associated to $d$ in the field $\mathbb{Q}(\sqrt{d})$. When $d$ is a fundamental discriminant, $\mathcal{O}_d$ coincides with the integral closure of $\mathbb{Z}$ in the quadratic field. For more details we refer the reader to Section \ref{prenot}. 

\begin{thm}\label{HKY criterion}
Let $n_i$ be in $\mathbb{N}_{\geq2}$ for $i=1,\ldots, m$ and let $\frak{D}$ be an infinite set of quadratic discriminants $d>0$ satisfying the following conditions:
\begin{itemize}
    \item[(a)] $ n_1, \ldots, n_m $ are norms of reduced principal ideals of $ \mathcal{O}_d $;
    \item[(b)] each $n_i$ decomposes in the form $n_i = n_i'n_i''$,
    where $n_i'$ and $n_i''$ are natural numbers such that:
    \begin{itemize}
        \item[(b1)] $n_i'$ is coprime with $d$;
        \item[(b2)] $n_i''$ is a square-free divisor of the fundamental discriminant $ d_0$ associated with $d$.
        \item[(b3)] $n_1', \dots, n_m' $ are coprime to each other.
    \end{itemize}
\end{itemize}
Then it follows that
\begin{equation*}
\log{\xi_d} \gg (\log{\sqrt{d}})^{m+1} \quad (d \in \frak{D}).
\end{equation*}
\end{thm}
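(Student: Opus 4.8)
The strategy is to build a long chain of reduced ideals in $\mathcal{O}_d$ whose norms are products of the $n_i$'s and their factors, and to translate the resulting chain length into a lower bound on $\log \xi_d$ via the continued-fraction / reduction theory of quadratic orders. The key mechanism is that each $n_i$ being the norm of a reduced principal ideal gives a cyclic subgroup (or at least an orbit) of the "infrastructure" of $\mathcal{O}_d$, and the coprimality conditions (b1)--(b3) let us multiply these ideals together without collisions, producing $\gg (\log\sqrt d)^m$ distinct reduced principal ideals, each of which corresponds to a distinct point on the cycle of reduced ideals whose total length is $\log \xi_d$. Since consecutive reduced ideals differ multiplicatively by a bounded amount (the partial quotients, hence the "distances" in the infrastructure, are $O(\log d)$ apart at worst, or rather each step costs at least a fixed amount and there are polynomially many forced steps), one gets $\log \xi_d \gg (\log \sqrt d)^{m+1}$.

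More concretely, I would first recall (from Section~\ref{prenot}, presumably, and from Yamamoto \cite[Theorem 3.1]{Yamamoto}) the dictionary: reduced principal ideals of $\mathcal{O}_d$ correspond to a cyclic sequence $\mathfrak{a}_0 = \mathcal{O}_d, \mathfrak{a}_1, \dots, \mathfrak{a}_\ell = \mathcal{O}_d$ arising from the continued fraction expansion of $\sqrt d$ (or $(d+\sqrt d)/2$), with $\log \xi_d = \sum_j \log \alpha_j$ where $\alpha_j > 1$ is the reducing automorphism and $\ell$ is the period length; crucially $\log \alpha_j \asymp \log(\text{partial quotient})$, so the period $\ell$ and $\log \xi_d$ are related but $\log\xi_d$ can be much larger than $\ell$. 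The heart of the argument: if $n$ is the norm of a reduced principal ideal $\mathfrak{b}$, then the powers $\mathfrak{b}, \mathfrak{b}^2, \dots$ — or rather the reduced ideals equivalent to them — have norms with controlled prime support, and there are $\gg \log\sqrt d / \log n$ of them that are distinct and reduced. Taking products $\mathfrak{b}_1^{e_1}\cdots \mathfrak{b}_m^{e_m}$ over ranges $0 \le e_i \ll \log\sqrt d / \log n_i$, condition (b1), (b3) on the $n_i'$ guarantees the norms $\prod (n_i')^{e_i}$ are all distinct (unique factorization into the coprime parts), while (b2) handles the part dividing $d_0$: since $n_i''$ is a squarefree divisor of $d_0$, the corresponding prime ideals are ramified, so $\mathfrak{p}_i^2 = (p_i)$ is principal and the ambiguous part contributes only a bounded multiplicity — hence we still get $\gg \prod_i (\log\sqrt d / \log n_i) \gg (\log \sqrt d)^m$ distinct reduced principal ideals. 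Each sits in the period of length $\ell$, so $\ell \gg (\log\sqrt d)^m$, and then $\log\xi_d \gg \ell \gg (\log \sqrt d)^m$ — but this only gives exponent $m$, not $m+1$.

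To recover the extra factor of $\log\sqrt d$, I would refine the counting: instead of just counting distinct reduced ideals, I track their \emph{distances} $\delta(\mathfrak{a}) = \log\alpha_0 + \cdots$ in the infrastructure, which live in $[0, \log\xi_d)$. The product ideal $\mathfrak{b}_1^{e_1}\cdots\mathfrak{b}_m^{e_m}$ has distance approximately $\sum e_i \delta(\mathfrak{b}_i) + O(\log d)$ (distances add up to a bounded error under ideal multiplication and reduction — this is the standard infrastructure estimate). Now the point is that each $\delta(\mathfrak{b}_i) \gg \log n_i$ is bounded \emph{below} away from zero but also the ranges of $e_i$ can be taken up to $\asymp \log\sqrt d$ (not $\log\sqrt d/\log n_i$) because the $n_i$ are \emph{fixed} constants while $d\to\infty$; so the distances $\sum e_i\delta(\mathfrak{b}_i)$ range over a set of size $\asymp (\log\sqrt d)^m$ inside $[0,\log\xi_d)$, and they are $\gg 1$-separated (by distinctness plus a lower bound on the gap between distinct reduced ideals). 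Hence $\log\xi_d \gg (\log\sqrt d)^m$ — still $m$, not $m+1$. The genuine source of the extra power must be that for each fixed reduced ideal $\mathfrak{b}_i$ of norm $n_i$, Yamamoto's Theorem 3.1 type argument already shows $\log\xi_d \gg \log\sqrt d$ just from $n_i$ being \emph{smaller} than $\sqrt d$ in a suitable sense, i.e. the ``width'' one gains per factor is itself $\asymp \log\sqrt d$ because $e_i$ ranges up to $\asymp \log\sqrt d/\log(2)$ essentially freely; I'd want to re-examine Yamamoto's induction, where passing from $m$ norms to $m+1$ multiplies the bound by $\log\sqrt d$, and mirror that inductive step here.

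**Main obstacle.** I expect the crux — and the place where Halter-Koch's original argument went wrong, per \cite[p.~634]{JacWill} — is precisely the bookkeeping that the product ideals $\mathfrak{b}_1^{e_1}\cdots\mathfrak{b}_m^{e_m}$ remain (equivalent to) reduced principal ideals with the \emph{correct} norms and, more delicately, that they remain \emph{distinct} in the cycle: one must rule out that two different exponent vectors $(e_i)\ne(e_i')$ give the same reduced ideal, which is exactly where (b1)--(b3) must be used with care (the ramified part $n_i''$ could a priori create coincidences $\mathfrak{p}_i^2\sim\mathcal{O}_d$). Getting the full exponent $m+1$ rather than $m$ — i.e. correctly extracting a factor $\log\sqrt d$ per norm rather than $\log\sqrt d$ total — via the right inductive/infrastructure estimate is the second delicate point, and I suspect it requires carefully comparing distances $\delta$ rather than merely counting ideals, together with the elementary but essential observation that $n_1,\dots,n_m$ are fixed as $d\to\infty$ over $\mathfrak{D}$.
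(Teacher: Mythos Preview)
Your general setup is right --- form the products $\prod_i \mathfrak{n}_i^{e_i}$ with $\prod_i n_i^{e_i} < \tfrac12\sqrt d$ and argue that they are distinct reduced principal ideals --- but you have correctly diagnosed your own gap: you only extract $(\log\sqrt d)^m$, not $(\log\sqrt d)^{m+1}$, and your proposed fix via infrastructure distances does not close it. The paper's mechanism for the extra factor is not a separation-of-distances argument at all. It uses the elementary pointwise bound: if $\mathfrak{a}\in\Omega$ has norm $A=\prod n_i^{e_i}$ and $\rho_{\mathfrak a}=(b+\sqrt d)/(2A)$ is the associated reduced irrational, then $-1<\rho_{\mathfrak a}'<0$ forces $b>0$, hence $\rho_{\mathfrak a}>\sqrt d/(2A)$. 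Since $\xi_d$ is the product of \emph{all} reduced irrationals in the principal cycle, one gets
\[
\log\xi_d \;\ge\; \sum_{\mathfrak a\in\Omega}\log\rho_{\mathfrak a}
\;>\;\sum_{(e_1,\dots,e_m)\in\mathbb Z^m\cap\mathcal G}\Bigl(\log\tfrac{\sqrt d}{2}-\sum_i e_i\log n_i\Bigr),
\]
and this lattice sum over the simplex $\mathcal G$ is approximated by the integral $\frac{m}{(m+1)!\prod_i\log n_i}\bigl(\log\tfrac{\sqrt d}{2}\bigr)^{m+1}$. So the extra $\log\sqrt d$ comes from the \emph{size} of each $\rho_{\mathfrak a}$ (which is $\asymp\sqrt d/A$, not merely $\gg 1$), not from counting or spacing.

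Two secondary points. First, your treatment of (b2) is vaguer than the paper's, which disposes of the ramified part cleanly: one checks $\mathfrak n_i''^{\,2}=n_i''\mathcal O_d$, whence $\mathfrak n_i'^{\,2}$ is principal, primitive, regular and reduced, so one may simply \emph{replace} $n_i$ by $n_i'^{\,2}$ and assume $n_i''=1$ throughout. Second --- and this is exactly where Halter-Koch's original argument failed --- showing that each $\prod_i\mathfrak n_i^{e_i}$ is genuinely reduced (not merely equivalent to a reduced ideal) needs care: the paper uses that a primitive ideal prime to the conductor with norm $<\tfrac12\sqrt d$ is reduced, and verifies primitivity via (b3) plus a separate lemma (Lemma~\ref{lemma Omega}) guaranteeing that $\mathfrak n_i^{e_i}$ stays primitive when $(n_i,d)=1$. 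Your sketch asserts distinctness but does not supply this primitivity argument.
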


Halter-Koch asked that $n_i'$ contained only primes $p$ such that $\chi_d(p) = 1$. As a consequence of \eqref{cond modulo ideale} and the fact that, by (a), the numbers $n_i$ in particular are norms of primitive ideals (see Section \ref{prenot}), $\chi_d(p) = 1$ is equivalent to (b1). Concerning (b3), in his version $n_1', \dots, n_m' $ were multiplicatively independent, i.e. if $ \prod_{i=1}^m (n_i')^{\alpha_i} = 1 $ with $ \alpha_i \in \mathbb{Z} $ then $\alpha_i=0$ for all $i$, while we opted for a more restrictive assumption.

When $n_i$ are pairwise distinct primes (hence $n_i''=1$) that split into principal ideals for every $d\in\frak{D}$, we recover Yamamoto's \cite[Theorem 3.1]{Yamamoto}. 

The interest in results like Yamamoto's criterion and Theorem \ref{HKY criterion} is due to the fact that they provide algebraic conditions to bound from below the fundamental unit which avoid the computation of the whole continued fraction of $\sqrt{d}$ or $\frac{1+\sqrt{d}}{2}$. For an effective version of \cite[Theorem 3.1]{Yamamoto} we refer to the work of Reiter \cite{Reiter}, while for an application we suggest the paper of Jacobson and Williams \cite{JacWill}, where they compared the fundamental units of consecutive Pell equations. 

Our paper is organized as follows: in Section \ref{Section thm simultaneo} we will prove Theorem \ref{thm simultaneo} by estimating the $L$-function, the number of squarefree values and the fundamental unit;
Section \ref{prenot} gives a brief review of the basic theory of quadratic orders, quadratic irrationals and continued fractions; we end the article with Section \ref{proof HKY criterion}, devoted to the proof of Theorem \ref{HKY criterion}.

%PROOF SIMULTANEOUS FAMILIES

\section{Proof of Theorem \ref{thm simultaneo}}\label{Section thm simultaneo}
We now outline the general strategy. By \cite[Proposition 3.2]{Lamzouri}, if $d$ is a fundamental discriminant we are able to approximate $L(1,\chi_d)$ as a truncated Euler product. Our aim is to obtain 
\begin{equation*}
L(1,\chi_d)\ll (\log{\log{d}})^{-1},
\end{equation*}
i.e. the smallest possible asymptotic bound for the $L$-function if one assumes GRH (see \eqref{bound L}). In $d=N+4p$ we put $N=n^2$ and we restrict $n$ to a suitable arithmetic progression so that $\chi_d$ assumes the value $-1$ over many small primes: this will allow us to minimize the $L$-function. 
Then, through the small sieve, we count the number of squarefree values of $d$ occurring in the subfamily determined by the arithmetic progression. At this point we provide an estimate from below for $\xi_d$ through a calculation performed by following Yamamoto's proof. Indeed, if $d=n^2+4p$ we know that $\log{\xi_d}\gg (\log{d})^2$ with an implicit constant depending on $p$. We make the dependence explicit and obtain the constants stated in Theorem \ref{thm simultaneo}.

Choose $\varepsilon_1\in (0,1)$ and let $p_i$, for $i=1,\ldots, m$ be primes as in the Theorem \ref{thm simultaneo}.

%SOTTOSEZIONE L FUNCTION
\subsection{The $L$-function}

A good approximation to $L(1,\chi)$, for $\chi$ a primitive character, was obtained by Granville and Soundararajan \cite[Proposition 2.2]{GranvilleSound} using zero density estimates together with the large sieve. Later Lamzouri \cite{Lamzouri} improved their result by restricting to quadratic characters. We recall the statement:

\begin{prop}{\cite[Proposition 3.2]{Lamzouri}}\label{prop L Lam}
Let $A>1$ fixed. Then for all but at most $x^{1/A+o(1)}$ fundamental discriminants $0<d<x$ we have
\begin{equation}\label{eq L Lam}
L(1,\chi_d)=\prod_{p\leq (\log{x})^A}\left(1-\frac{\chi_d(p)}{p}\right)^{-1} \left( 1+O\left( \frac{1}{\log{\log{x}}} \right)  \right).
\end{equation}
\end{prop}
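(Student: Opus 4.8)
Write $y:=(\log x)^A$ and $P(y;d):=\prod_{p\le y}\bigl(1-\tfrac{\chi_d(p)}{p}\bigr)^{-1}$. Taking logarithms, \eqref{eq L Lam} is equivalent to
\begin{equation*}
\log L(1,\chi_d)-\log P(y;d)\ll\frac{1}{\log\log x}
\end{equation*}
outside a set of at most $x^{1/A+o(1)}$ fundamental discriminants $0<d<x$. Expanding both sides as sums over prime powers, via $\log L(1,\chi_d)=\sum_{n\ge2}\frac{\Lambda(n)\chi_d(n)}{n\log n}$, the left-hand side equals $S_y(d)+O(1/y)$, where $S_y(d):=\sum_{n>y}\frac{\Lambda(n)\chi_d(n)}{n\log n}$ and the $O(1/y)$ collects the prime powers $p^k>y$ with $p\le y$, whose total weight is negligible against $1/\log\log x$. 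So the whole problem is to show that the tail $S_y(d)$ is $\ll 1/\log\log x$ for all but $x^{1/A+o(1)}$ of the $d$.

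The plan is the two-step strategy of Granville--Soundararajan \cite[Proposition 2.2]{GranvilleSound}, in the form specialised to real characters by Lamzouri. \emph{First}, a zero-density input: a log-free zero density estimate for the family $\{\chi_d\}$ shows that, for all but $x^{o(1)}$ fundamental discriminants $0<d<x$, $L(s,\chi_d)$ has no zero in a region $\Re(s)\ge 1-\frac{c}{\log\log x}$, $|\Im(s)|\le(\log x)^2$; for every such $d$, shifting the contour in the usual integral representation of $\log L(1,\chi_d)$ gives
\begin{equation*}
\log L(1,\chi_d)=\sum_{n\le z}\frac{\Lambda(n)\chi_d(n)}{n\log n}\,w(n)+O\!\left(\frac{1}{\log\log x}\right)
\end{equation*}
for a smooth cutoff $w$ supported in $[0,z]$ and a truncation length $z$ slightly larger than any fixed power of $\log x$ (e.g.\ $z=\exp((\log\log x)^2)$). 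Combined with the first paragraph, it remains to show that the short piece $T(d):=\sum_{y<n\le z}\frac{\Lambda(n)\chi_d(n)}{n\log n}w(n)$ satisfies $T(d)\ll 1/\log\log x$ for all but $x^{1/A+o(1)}$ of the $d$. \emph{Second}, this is done by a large even moment: since $T(d)$ is real, $\sum_{0<d<x}^{\flat}T(d)^{2k}=\sum_{n_1,\dots,n_{2k}}\Bigl(\prod_j\frac{\Lambda(n_j)w(n_j)}{n_j\log n_j}\Bigr)\sum_{0<d<x}^{\flat}\chi_d(n_1\cdots n_{2k})$. The diagonal terms, where $n_1\cdots n_{2k}$ is a perfect square, contribute $\ll x\bigl(\sum_{n>y}\frac{\Lambda(n)^2}{n^2(\log n)^2}\bigr)^{k}\frac{(2k)!}{2^kk!}$, which is small because $\sum_{n>y}\frac{\Lambda(n)^2}{n^2(\log n)^2}\asymp y^{-1}(\log y)^{-1}$; the off-diagonal terms are handled by the Pólya--Vinogradov bound $\sum_{0<d<x}^{\flat}\chi_d(m)\ll m^{1/2}\log m$ for non-square $m$ (here one also uses the standard parametrisation of fundamental discriminants, which only inserts a congruence condition). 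Feeding this into Markov's inequality with threshold $V\asymp 1/\log\log x$ and optimising the moment order $k$ against $z$ yields the exceptional count $x^{1/A+o(1)}$.

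The step I expect to be the main obstacle is exactly the bookkeeping in this last part: the contour argument wants $z$ at least slightly above every power of $\log x$ so that the truncation error is $o(1/\log\log x)$, whereas the moment argument wants $z$ small and $k$ large so that the off-diagonal character sums, of size roughly $z^{k}k\log z$, stay below $x^{1/A}$, while $k$ must simultaneously be large enough that the diagonal term $x(Ck/y)^{k}$ drops below $x^{1/A}$. Threading these competing constraints — and in particular exploiting that the zero-free width can be taken as small as $\asymp 1/\log\log x$, which is precisely what produces both the error term $O(1/\log\log x)$ and the exponent $1/A$ — is the delicate heart of the argument; everything else is routine. (For a self-contained but slightly weaker statement one may simply invoke \cite[Proposition 2.2]{GranvilleSound}, valid for all primitive characters, and specialise to $\chi=\chi_d$.)
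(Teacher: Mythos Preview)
The paper does not prove this proposition at all: it is quoted verbatim as \cite[Proposition 3.2]{Lamzouri} and used as a black box, so there is no ``paper's own proof'' to compare against. Your sketch is a faithful outline of the Granville--Soundararajan/Lamzouri argument (zero-density input to truncate $\log L(1,\chi_d)$ to a short Dirichlet polynomial, then a high even moment over~$d$ to control the tail between $y=(\log x)^A$ and $z$), and as such it goes well beyond what the present paper does.
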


Primes up to a high power of $\log{x}$ might be hard to handle. However, as pointed out in \cite{Cherubini}, since $\varepsilon_1\in(0,1)$ by \eqref{eq L Lam} one gets
\begin{equation*}
L(1,\chi_d)\asymp_{\varepsilon_1}\prod_{p\leq (\log{x})^{\varepsilon_1}}\left(1-\frac{\chi_d(p)}{p}\right)^{-1}.
\end{equation*}
This shows that bounding trivially the primes $(\log{x})^{\varepsilon_1}<p\leq (\log{x})^A$ does not affect the final result. Then, we can consider just the primes up to $(\log{x})^{\varepsilon_1}$. 

Define
\begin{equation*}
S=\{p\leq m^24^m+2\quad and\quad p\neq p_i\ \forall i \},\qquad \mathcal{P}=\{p_i\mid p_i\leq 2m\}
\end{equation*}
and
\begin{equation*}
S'=\{p\mid m^24^m+2<p\leq(\log{x})^{\varepsilon_1}\quad and\quad p\neq p_i\ \forall i\};
\end{equation*}
then choose as modulus
\begin{equation}\label{def q sim}
q=\prod_{p\in \mathcal{P}\cup S\cup S'}p.
\end{equation}

\begin{rmk}\label{lemma q}
We will use that $q=o(x^\alpha)$ for any $\alpha\in (0,1)$ in Subsection \ref{squarefree counting} in order to find many squarefree values for $d_i$ with $n$ in an arithmetic progression. Moreover, since we will need it later, we observe that $q\gg (\log{x})^a$ for any $a>0$.
\end{rmk}

\begin{lem}\label{lemma modulo sim}
Let $d_i=n^2+4p_i$ for $i=1,\ldots,m$ and let $q$ be as in \eqref{def q sim}. If condition $(*)$ of Theorem \ref{thm simultaneo} holds, then there exists an integer $0\leq n_0<q$ such that, if $n\equiv n_0 \pmod{q}$, 
\begin{itemize}
   \item[(a)] $\chi_{d_i}(p)=-1$ for every $i$ and for every prime $p\in S'$;
   \item[(b)] $d_i\not\equiv0\pmod{p}$ for every $i$ and for every $p|q$.
\end{itemize}
\end{lem}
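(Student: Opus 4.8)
The plan is to construct $n_0$ by the Chinese Remainder Theorem, imposing one congruence condition on $n \bmod p$ for each prime $p \mid q$, and to translate the desired conclusions (a) and (b) into conditions on $\chi_{d_i}(p)$, i.e. on whether $d_i = n^2 + 4p_i$ is a quadratic residue or a unit mod $p$. Since $q$ is the product of the distinct primes in $\mathcal P \cup S \cup S'$, it suffices to treat each such prime separately and then glue.

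First I would handle the primes $p \in S'$ (the "large" primes $m^24^m + 2 < p \le (\log x)^{\varepsilon_1}$, none equal to any $p_i$), where both (a) and (b) must hold. For such a $p$, I want to find a residue class for $n \bmod p$ such that $n^2 + 4p_i$ is a nonzero non-residue mod $p$ for every $i = 1,\dots,m$ simultaneously. The key observation is that, since $p > m^2 4^m + 2$ is large relative to $m$, there is enough room: the map $n \mapsto n^2 \bmod p$ hits $(p+1)/2$ values, the set of nonzero non-residues has size $(p-1)/2$, and I need a value $t$ of $n^2$ such that $t + 4p_i$ is a nonzero non-residue for all $i$. A counting/character-sum argument shows that the number of residues $t$ that are themselves squares and for which all $m$ shifts $t+4p_i$ are nonzero non-residues is positive once $p$ exceeds a bound of the shape $c\cdot m^2 4^m$; the $4^m$ reflects a union bound over the $2^m$ sign patterns one must avoid, or equivalently a Weil-type bound on the relevant character sum $\sum_t \prod_i (1 - \chi_p(t+4p_i))\cdot(\text{indicator }t\text{ a square})$. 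This is exactly the technical reason the threshold $m^2 4^m + 2$ appears in the definition of $S$ and $S'$ and in $C(m)$. Picking any such $t$ and setting $n \equiv (\text{a square root of }t) \pmod p$ gives (a) and the nonvanishing part of (b) at $p$.

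Next, for the remaining primes dividing $q$ — those in $\mathcal P$ (the $p_i \le 2m$) and those in $S$ (small primes $\le m^2 4^m + 2$ distinct from all $p_i$) — only (b) is required, namely $d_i \not\equiv 0 \pmod p$ for all $i$. Here hypothesis $(*)$ enters: it provides a quadratic residue $N \bmod \prod_{p_j \le 2m} p_j$ with $(d_i, p_j) = 1$ for all $i$ and all $p_j \le 2m$; since $N$ is prescribed to be a square mod that modulus, one can pull back $N$ to a residue class for $n$ (taking a square root modulo each $p_j \le 2m$), and that choice makes $d_i = n^2 + 4p_i \not\equiv 0$ at every $p_j \in \mathcal P$ as well as at the primes $p_j \le 2m$ lying in $S$. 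For the small primes in $S$ with $p > 2m$ one argues directly: among the residues mod $p$ there are more than $m$ squares (in fact $(p+1)/2 > m$ since $p > 2m$... more precisely $p > m^2 4^m + 2$ comfortably suffices), so one can choose $n^2 \bmod p$ avoiding the finitely many forbidden values $-4p_i \bmod p$, $i = 1,\dots,m$; this is again a simple pigeonhole since the number of available square values exceeds $m$.

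Finally I would invoke the Chinese Remainder Theorem: the prime divisors of $q$ are distinct, so the congruences for $n \bmod p$ chosen in the previous two steps combine into a single $n_0$ with $0 \le n_0 < q$, and for any $n \equiv n_0 \pmod q$ all the local conditions hold, giving (a) and (b). The main obstacle is the counting argument at the primes $p \in S'$: one must verify that the character sum bounding the number of valid $t$ is genuinely positive for $p > m^2 4^m + 2$, keeping track of constants carefully enough that the threshold matches the one baked into $q$ and $C(m)$ — a Weil bound $\bigl|\sum_t \chi_p(f(t))\bigr| \le (\deg f - 1)\sqrt p$ applied to products of shifted linear factors, together with an inclusion–exclusion over which shifts are forced to be residues versus non-residues, should close the gap, but the bookkeeping is the delicate part.
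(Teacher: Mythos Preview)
Your plan matches the paper's proof almost exactly: Chinese Remainder Theorem, a Weil-bound character sum for each $p\in S'$ to produce a square $y=n^2$ with all $y+4p_i$ nonzero non-residues (the paper writes out the characteristic function $\phi_p(y)=2^{-(m+1)}\bigl(\tfrac{y}{p}\bigr)\bigl(1+\tfrac{y}{p}\bigr)\prod_i\bigl[\tfrac{y+4p_i}{p}\bigl(\tfrac{y+4p_i}{p}-1\bigr)\bigr]$, expands, and applies Weil to each piece, arriving at $\sum_y\phi_p(y)\ge \tfrac{p}{2^{m+1}}-\bigl(\tfrac{m-1}{2}+\tfrac{1}{2^{m+1}}\bigr)\sqrt{p}-\tfrac{m+1}{2}$, positive for $p>m^24^m+2$), then simple local choices for the remaining primes, then CRT.

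One small correction: your handling of the primes $p\in S$ with $p\le 2m$ does not go through. Condition $(*)$ only specifies $N$ modulo $\prod_{p_j\le 2m}p_j$, where the product is over the \emph{chosen} primes $p_j\in\{p_1,\dots,p_m\}$; it says nothing about primes $\le 2m$ that are not among the $p_i$, so it cannot be used for those $p\in S$. (Your phrase ``primes $p_j\le 2m$ lying in $S$'' is in fact an empty set, since $S$ excludes all $p_i$ by definition.) The paper's fix is trivial and cleaner than a pigeonhole: for every odd $p\in S$ take $n\equiv 0\pmod p$, so $d_i\equiv 4p_i\not\equiv 0\pmod p$ because $p\neq p_i$; for $p=2$ take $n\equiv 1\pmod 2$, so $d_i\equiv 1\pmod 2$. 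With this adjustment your outline is complete and coincides with the paper's argument.
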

Notice that, since $q$ is even, b) implies $d_i\equiv1 \pmod{4}$ for all $i$.

\begin{proof}[Proof of Lemma \ref{lemma modulo sim}]
Since over odd primes $\chi_d$ is precisely the Legendre symbol $\left(\frac{d}{p}\right)$, we first determine the odd primes $p$ for which there exists $y$ such that
\begin{equation}\label{conditions simultaneous families}
\left(\frac{y}{p}\right)=1\quad\mbox{and}\quad \left( \frac{y+4p_i}{p} \right)=-1
\end{equation}
for every $i$.
Let us define the characteristic function
\begin{equation*}
\phi_p(y):=\frac{1}{2^{m+1}}\left(\frac{y}{p}\right)\left(1+\left(\frac{y}{p}\right)\right) \prod_{i=1}^m \left[ \left(\frac{y+4p_i}{p}\right)\left(\left(\frac{y+4p_i}{p}\right)-1\right)\right]
\end{equation*}
which returns $1$ or $0$ according to whether $y$ satisfies \eqref{conditions simultaneous families} or not.

If $p\neq 2,p_1,\ldots, p_m$ then
\begin{equation*}
\sum_{y\in \mathbb{F}_p}\phi_p(y)=\frac{1}{2^{m+1}}\sideset{}{'}\sum \left(\left(\frac{y}{p}\right)+1\right) \prod_{i=1}^m \left(1-\left(\frac{y+4p_i}{p}\right)\right),
\end{equation*}
where $\sideset{}{'}\sum$ means we are summing over the classes $y\not\equiv 0,-4p_i \pmod{p}$, which are at most $m+1$. If we reintroduce the corresponding terms in the sum on the right, each of them contributes at most $2^m$.
Thus
\begin{align}
\sum_{y\in \mathbb{F}_p}\phi_p(y)&\geq \frac{1}{2^{m+1}} \sum_{y\in\mathbb{F}_p} \left(\left(\frac{y}{p}\right)+1\right) \prod_{i=1}^m \left(1-\left(\frac{y+4p_i}{p}\right)\right) - \frac{m+1}{2} \nonumber\\
&=\frac{1}{2^{m+1}}\sum_{y\in\mathbb{F}_p} \left(\left(\frac{y}{p}\right)+1\right) \sum_{j=0}^m\sum_{\substack{J\subset\{1,\ldots,m\}\\ |J|=j}}(-1)^j\prod_{t\in J}\left(\frac{y+4p_t}{p}\right)- \frac{m+1}{2}\nonumber\\
&=\frac{1}{2^{m+1}}\sum_{j=0}^m\sum_{\substack{J\subset\{1,\ldots,m\}\\ |J|=j}}(-1)^j\sum_{y\in\mathbb{F}_p}\left(\frac{\prod_{t\in J}(y+4p_t)}{p}\right) \label{somme generatrice 1}\\
&+\frac{1}{2^{m+1}}\sum_{j=0}^m\sum_{\substack{J\subset\{1,\ldots,m\}\\ |J|=j}}(-1)^j\sum_{y\in\mathbb{F}_p}\left(\frac{y\prod_{t\in J}(y+4p_t)}{p}\right)-\frac{m+1}{2} \label{somme generatrice 2}.
\end{align}
We need to estimate the sums \eqref{somme generatrice 1} and \eqref{somme generatrice 2}; when possible we will apply Weil's theorem, see \cite[Theorem 11.23]{Iwaniec}. 
If $\prod_{t\in J}(y+4p_t)$ is a square as a polynomial in $\mathbb{F}_p[y]$, then 
\begin{equation*}
\left(\frac{\prod_{t\in J}(y+4p_t)}{p}\right)=1
\end{equation*}
for every value of $y$, so that the sum over $y\in\mathbb{F}_p$ is $p$. Moreover, as $j$ must be even, the contribution is positive. 
If $\prod_{t\in J}(y+4p_t)$ is not a square as a polynomial in $y$ then, by \cite[Theorem 11.23]{Iwaniec},
\begin{equation*}
\left|\sum_{y\in\mathbb{F}_p}\left(\frac{\prod_{t\in J}(y+4p_t)}{p}\right)\right|\leq (j-1)p^{1/2}.
\end{equation*}
Let us look at \eqref{somme generatrice 1}. Since $J=\emptyset$ always gives $p$, we get the bound
\begin{equation}\label{bin bound 1}
\sum_{j=0}^m\sum_{\substack{J\subset\{1,\ldots,m\}\\ |J|=j}}(-1)^j\sum_{y\in\mathbb{F}_p}\left(\frac{\prod_{t\in J}(y+4p_t)}{p}\right)\geq p- p^{1/2}\sum_{j=1}^m\binom{m}{j}(j-1)
\end{equation}
while, whereas by the binomial theorem we have
\begin{equation*}
\sum_{j=1}^m\binom{m}{j}(j-1)=m2^{m-1}-2^m+1,
\end{equation*}
the right-hand side in \eqref{bin bound 1} equals
\begin{equation}\label{bin bound 1.1}
p-p^{1/2}-m2^{m-1}p^{1/2}+2^mp^{1/2}.
\end{equation}
We now look at \eqref{somme generatrice 2}. We observe that, since $p\neq 2,p_i$, the product $y\prod_{t\in J}(y+4p_t)$ is never a square as a polynomial in $\mathbb{F}_p[y]$. Again by \cite[Theorem 11.23]{Iwaniec} and the binomial theorem, we get
\begin{equation}\label{bin bound 2}
\sum_{j=0}^m\sum_{\substack{J\subset\{1,\ldots,m\}\\ |J|=j}}(-1)^j\sum_{y\in\mathbb{F}_p}\left(\frac{y\prod_{t\in J}(y+4p_t)}{p}\right)\geq - m2^{m-1}p^{1/2}.
\end{equation}
Combining \eqref{bin bound 1}, \eqref{bin bound 1.1} and \eqref{bin bound 2}, we obtain
\begin{equation*}
\sum_{y\in\mathbb{F}_p}\phi_p(y)\geq \frac{p}{2^{m+1}}-\left(\frac{m-1}{2}+\frac{1}{2^{m+1}}\right)p^{1/2}-\frac{m+1}{2};
\end{equation*}
then if $p\geq m^24^m+3$ it is straightforward to see that the right-hand side is positive so that, \textit{a fortiori}, if $p\in S'$ then \eqref{conditions simultaneous families} has a solution.
 
Now if $p\in S$ we choose $n$ in any class such that every $d_i$ is not $0$ modulo $p$: for instance if $p>2$ we can take $n\equiv0\pmod{p}$ while if $p=2$ take $n\equiv 1\pmod{2}$. Thanks to $(*)$, there is a class $n_0$ modulo $p_j$ such that $n^2+4p_i\not\equiv 0 \pmod{p_j}$ for all $1\leq i\leq m$ and all $p_j\leq 2m$. An application of the Chinese reminder theorem ends the proof.
\end{proof}

As in the statement of Lemma \ref{lemma modulo sim}, throughout this section we will call $n_0$ and $q$ the parameters defining the arithmetic progression of $n$. Let us show that along this progression we can bound the value of the $L$-function.

\begin{lem}
Let $d_i=n^2+4p_i$ be squarefree with the parameter $n$ in the arithmetic progression given by $n_0$ and $q$ (see Lemma \ref{lemma modulo sim}) and assume \eqref{eq L Lam} holds for every $i=1,\ldots,m$. Then
\begin{equation}\label{stima L simultanea}
L(1,\chi_{d_i})<C'(m)\frac{2e^{-M}}{\log\log{x}} \cdot \frac{A}{(\varepsilon_1)^2}(1+o(1) )
\end{equation}
where
\begin{equation*}
C'(m)=\prod_{p_i> m^24^m+2}\left( \frac{p_i+1}{p_i-1} \right)\prod_{p\leq m^24^m+2}\left( \frac{p+1}{p-1} \right)
\end{equation*}
and $M=0.26149\ldots$ is Merten's constant.
\end{lem}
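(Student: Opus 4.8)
\emph{Proof strategy.} The plan is to feed the arithmetic data of Lemma~\ref{lemma modulo sim} into the truncated Euler product of Proposition~\ref{prop L Lam} and then to evaluate the surviving product of local factors $(1+1/p)^{-1}$ by Mertens' theorems, keeping all constants explicit. By hypothesis \eqref{eq L Lam} holds for each $d_i$. First I would reduce the cut-off from $(\log x)^A$ to $(\log x)^{\varepsilon_1}$: since $\left(1-\tfrac{\chi_{d_i}(p)}{p}\right)^{-1}\le\left(1-\tfrac1p\right)^{-1}$ for every value of $\chi_{d_i}(p)$, the primes in $(\log x)^{\varepsilon_1}<p\le(\log x)^A$ contribute at most $\prod_{p\le(\log x)^A}(1-1/p)^{-1}\big/\prod_{p\le(\log x)^{\varepsilon_1}}(1-1/p)^{-1}=\tfrac{A}{\varepsilon_1}(1+o(1))$ by Mertens' theorem, which is the explicit form of the $\asymp_{\varepsilon_1}$ recalled after Proposition~\ref{prop L Lam} (cf.\ \cite{Cherubini}). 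Hence
\[
L(1,\chi_{d_i})\le\frac{A}{\varepsilon_1}(1+o(1))\prod_{p\le(\log x)^{\varepsilon_1}}\left(1-\frac{\chi_{d_i}(p)}{p}\right)^{-1}.
\]

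Next I would split this truncated product according to Lemma~\ref{lemma modulo sim}. For $p\in S'$ one has $\chi_{d_i}(p)=-1$, so the local factor is exactly $(1+1/p)^{-1}$; the primes $p\le(\log x)^{\varepsilon_1}$ not in $S'$ form the finite set $\{p\le m^24^m+2\}\cup\{p_1,\dots,p_m\}$, and for these I would only use the bound $\left(1-\tfrac{\chi_{d_i}(p)}{p}\right)^{-1}\le\tfrac{p}{p-1}$, valid for every $\chi_{d_i}(p)\in\{0,\pm1\}$ (in particular at $p=2$ and at $p=p_i$, where $\chi_{d_i}(p_i)$ may be $1$ or $0$). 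Factoring out the complete product over $p\le(\log x)^{\varepsilon_1}$,
\[
\prod_{p\in S'}\left(1+\frac1p\right)^{-1}=\prod_{p\le(\log x)^{\varepsilon_1}}\left(1+\frac1p\right)^{-1}\prod_{\substack{p\le(\log x)^{\varepsilon_1}\\ p\notin S'}}\left(1+\frac1p\right),
\]
and since $\tfrac{p}{p-1}\left(1+\tfrac1p\right)=\tfrac{p+1}{p-1}$, the exceptional primes reassemble into $\prod_{p\le m^24^m+2}\tfrac{p+1}{p-1}\prod_{p_i>m^24^m+2}\tfrac{p_i+1}{p_i-1}=C'(m)$ (a $p_i$ exceeding $(\log x)^{\varepsilon_1}$, if present, is already counted in the $\tfrac{A}{\varepsilon_1}$--tail or contributes $1+o(1)$, and enlarging the product by the remaining factors $>1$ only weakens the estimate). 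This gives
\[
\prod_{p\le(\log x)^{\varepsilon_1}}\left(1-\frac{\chi_{d_i}(p)}{p}\right)^{-1}\le C'(m)\prod_{p\le(\log x)^{\varepsilon_1}}\left(1+\frac1p\right)^{-1}.
\]

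Finally I would evaluate $\prod_{p\le y}(1+1/p)^{-1}$ with $y=(\log x)^{\varepsilon_1}$, so that $\log y=\varepsilon_1\log\log x$. From $(1+1/p)^{-1}=(1-1/p)/(1-1/p^2)$ and $\prod_p(1-1/p^2)=6/\pi^2>1/2$ one gets $\prod_{p\le y}(1+1/p)^{-1}\le2\prod_{p\le y}(1-1/p)$; then $\log(1-1/p)\le-1/p$ together with Mertens' second theorem $\sum_{p\le y}1/p=\log\log y+M+o(1)$ yields
\[
\prod_{p\le y}\left(1-\frac1p\right)\le\exp\left(-\sum_{p\le y}\frac1p\right)=\frac{e^{-M}}{\log y}(1+o(1))=\frac{e^{-M}}{\varepsilon_1\log\log x}(1+o(1)).
\]
Multiplying the three displayed inequalities produces \eqref{stima L simultanea}.

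The Mertens-type computations are routine; the delicate point is the bookkeeping of the finitely many exceptional primes — one must check that the recombination yields exactly the constant $C'(m)$, treat carefully the primes $p=2$ and $p=p_i$ (where $\chi_{d_i}$ need not equal $-1$ and may vanish), and make sure that $m$ and the $p_i\le(\log x)^a$ can be regarded as fixed with respect to $x$, so that the finite products are genuine constants and all $o(1)$'s are uniform. It is precisely to make Lemma~\ref{lemma modulo sim}(b) available for every $p\mid q$, and thereby keep these exceptional local factors under control, that $S$, $\mathcal P$ and the modulus $q$ in \eqref{def q sim} were arranged as they were.
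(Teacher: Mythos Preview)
Your proposal is correct and follows essentially the same route as the paper: split the Euler product into the tail $(\log x)^{\varepsilon_1}<p\le(\log x)^A$ (bounded by $A/\varepsilon_1$ via Mertens), the primes in $S'$ (where $\chi_{d_i}(p)=-1$), and the finitely many exceptional primes (bounded trivially and repackaged as the $(p+1)/(p-1)$ factors in $C'(m)$), then evaluate $\prod_{p\le y}(1+1/p)^{-1}$ by Mertens' second theorem. The only cosmetic difference is that the paper obtains the factor $2$ from $\log(1+t)>t-t^2$ and $\exp(\sum_p 1/p^2)\le 1.9$, whereas you use $(1+1/p)^{-1}=(1-1/p)/(1-1/p^2)$ and $\zeta(2)^{-1}>1/2$; the two computations are equivalent.
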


\begin{proof}
Call $\mathcal{P}_0$ the set of primes $p_i\in (m^24^m+2, (\log{x})^{\varepsilon_1}]$. We factor $L(1,\chi_{d_i})$ as in \eqref{eq L Lam} and estimate separately the different terms of the product. First notice that
\begin{equation}\label{prod 1}
\prod_{p\leq m^24^m+2}\left( 1-\frac{\chi_{d_i}(p)}{p} \right)^{-1}\leq \prod_{p\leq m^24^m+2}\left( 1-\frac{1}{p} \right)^{-1},
\end{equation}
while by Lemma \ref{lemma modulo sim} (a)
\begin{equation}\label{prod 2}
\prod_{\substack{m^24^m+2<p\\ p\leq (\log{x})^{\varepsilon_1}}}\left( 1-\frac{\chi_{d_i}(p)}{p} \right)^{-1}\leq \prod_{p \in S'}\left( 1+\frac{1}{p} \right)^{-1} \prod_{p\in \mathcal{P}_0}\left( 1-\frac{1}{p} \right)^{-1}.
\end{equation}
Inequalities \eqref{prod 1} and \eqref{prod 2} imply
\begin{equation}\label{prod 3}
\prod_{p\leq (\log{x})^{\varepsilon_1}}\left( 1-\frac{\chi_{d_i}(p)}{p} \right)^{-1}\leq \prod_{\substack{p\leq m^24^m+2\\ or\ p\in\mathcal{P}_0}}\left( \frac{p+1}{p-1} \right)\prod_{p\leq (\log{x})^{\varepsilon_1}}\left( 1+\frac{1}{p} \right)^{-1}
\end{equation}
and, as $0<\mid t \mid\leq 1/2$ guarantees $\log{(1+t)}> t-t^2$, we get
\begin{equation*}
\prod_{p\leq (\log{x})^{\varepsilon_1}}\left( 1+\frac{1}{p} \right)^{-1}< 1.9\exp\left(- \sum_{p\leq (\log{x})^{\varepsilon_1}}\frac{1}{p} \right)\leq\frac{2e^{-M}}{\varepsilon_1\log\log{x}},
\end{equation*}
where we bounded $\exp\left(\sum \frac{1}{p^2}\right)\leq1.9$ and applied Merten's theorem for $x$ big enough.
Inserting the above in \eqref{prod 3} we deduce
\begin{equation}\label{prod 5}
\prod_{p\leq(\log{x})^{\varepsilon_1}}\left( 1-\frac{\chi_{d_i}(p)}{p} \right)^{-1}< \prod_{\substack{p\leq m^24^m+2\\ or\ p\in\mathcal{P}_0}}\left( \frac{p+1}{p-1} \right)\frac{2e^{-M}}{\varepsilon_1\log\log{x}}.
\end{equation}

Similarly
\begin{equation}\label{prod 6}
\prod_{\substack{(\log{x})^{\varepsilon_1}<p\\ p\leq (\log{x})^A}}\left( 1-\frac{\chi_{d_i}(p)}{p} \right)^{-1}\leq \prod_{\substack{(\log{x})^{\varepsilon_1}<p\\ p\leq (\log{x})^A}}\left( 1-\frac{1}{p} \right)^{-1}< \frac{A}{\varepsilon_1}\exp(o(1))
\end{equation}
and then for $x$ large, combining \eqref{eq L Lam}, \eqref{prod 5} and \eqref{prod 6}, we get
\begin{equation*}
L(1,\chi_{d_i})< \frac{2e^{-M}}{\log\log{x}}  \frac{A}{(\varepsilon_1)^2}(1+o(1))\prod_{p_i> m^24^m+2}\left( \frac{p_i+1}{p_i-1} \right)\prod_{p\leq m^24^m+2}\left( \frac{p+1}{p-1} \right).
\end{equation*}
\end{proof}

%SOTTOSEZIONE SQUAREFREE
\subsection{Counting the squarefree}\label{squarefree counting}

We now want to prove that there are many discriminants in arithmetic progression that can be taken simultaneously squarefree.
If $d_i=(n_0+kq)^2+4p_i$, for every $i=1,\ldots, m$, we know there are at most two classes for $k$ modulo $p^2$ for which $d_i$ is a multiple of $p^2$. We will use the small sieve to cancel these classes and produce $m$ discriminants simultaneously squarefree.

\begin{prop}\label{simultaneous squarefree}
Fix $\varepsilon>0$ and let $n_0$ and $q$ be as in Lemma \ref{lemma modulo sim}. Then, if $x\gg1$ and $d_i=(n_0+kq)^2+4p_i$ with $p_i\leq (\log{x})^a$ for some fixed $a>0$, the number of $k>0$ such that $d_i<x$ is squarefree for all $i$ is at least $x^{1/2-\varepsilon}$.
\end{prop}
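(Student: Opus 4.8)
The plan is to run a lower-bound (small) sieve on the parameter $k$, sieving out, for each prime $p$ with $p^2$ small enough, the residue classes of $k$ modulo $p^2$ that make some $d_i=(n_0+kq)^2+4p_i$ divisible by $p^2$. First I would set up the range: since we want $d_i<x$ with $d_i\asymp (kq)^2$, we are looking at $k$ up to roughly $\sqrt{x}/q$, and by Remark \ref{lemma q} we have $q=o(x^\alpha)$ for every $\alpha>0$, so this range has size $x^{1/2-o(1)}$, comfortably more than $x^{1/2-\varepsilon}$ before sieving. For the sieve I would use a standard lower-bound sieve (Selberg's sieve, or the simple combinatorial sieve of Brun/Montgomery–Vaughan as in \cite[Ch.\ 6]{Iwaniec} or the fundamental lemma of the sieve) applied to the sequence $\mathcal{A}=\{k\le \sqrt{x}/q : k>0\}$ with the sieving condition that $p^2\nmid d_i(k)$ for all $i=1,\dots,m$ and all primes $p$ in a suitable range.

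The key local computation is to count, for each prime $p$, the number $\rho(p^2)$ of bad residues of $k$ modulo $p^2$. For a fixed $i$, $p^2\mid (n_0+kq)^2+4p_i$ forces $(n_0+kq)^2\equiv -4p_i\pmod{p^2}$; this has at most two solutions for $n_0+kq$ modulo $p^2$ (and none unless $-4p_i$ is a square mod $p^2$), hence at most two values of $k$ mod $p^2$ once we note $(q,p)=1$ — here is where Lemma \ref{lemma modulo sim}(b) and the definition of $q$ matter, since for $p\mid q$ the discriminants are automatically coprime to $p$ and contribute nothing, while for $p\nmid q$ we have $q$ invertible mod $p^2$. Summing over $i$, $\rho(p^2)\le 2m$ for every prime $p$ not dividing $q$, so the sieve density is $\rho(p^2)/p^2\le 2m/p^2$, and $\sum_p \rho(p^2)/p^2$ converges. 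This is a "small" (low-dimensional, in fact dimension $0$) sieve situation: the sieving set of primes is sparse enough that the sieve loses only a constant factor.

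Concretely, I would split the bad primes into the small ones, $p\le z$ with $z$ a small power of $x$ (say $z=x^{\varepsilon/3}$), handled by the combinatorial sieve, and the large ones, $z<p\le \sqrt{x+O(1)}$ (no prime $p>\sqrt{x}$ can have $p^2$ divide $d_i<x+O(1)$), handled by a crude union bound: the number of $k\le \sqrt{x}/q$ with $p^2\mid d_i(k)$ for some such $p$ and some $i$ is $\ll \sum_i \sum_{z<p\le\sqrt x}\big(\frac{\sqrt x/q}{p^2}+1\big)\ll \frac{\sqrt x}{qz}+m\sqrt{x}\,\pi(\sqrt x)^{0}\ll m\sqrt{x}/\log x$ from the $+1$ terms — wait, this needs care: the $+1$ terms sum to $\ll m\,\pi(\sqrt x)\asymp \sqrt x/\log x$, which is too large. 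The fix is the standard one: observe that $p^2\mid d_i(k)$ with $p>z$ and $d_i(k)<x$ forces the (bounded) integer $d_i(k)/(\text{small squarefree part})$... more simply, the count of $k\le K:=\sqrt x/q$ for which $d_i(k)$ has a square factor $>z^2$ is $\le \sum_{j>z} \#\{k\le K: j^2\mid d_i(k)\}\le \sum_{z<j\le \sqrt{x+O(1)}}\big(2K/j^2+2\big)$, and the $+2$ terms number $O(\sqrt x)$ — so instead one uses that a value $d_i(k)<x$ divisible by $j^2$ with $j>z$ determines $k$ among $O(K/j^2)$ choices only when $j^2\le x$, and crucially there are at most $O(\sqrt x/z)$ such $(k,j)$ pairs by a divisor-type argument: for each $k$, the largest square dividing $d_i(k)$ is some $j^2$, and the number of $k$ with $j>z$ is bounded using $\sum_{k\le K}\sum_{j>z, j^2\mid d_i(k)}1 = \sum_{z<j}\#\{k\le K: d_i(k)\equiv 0 \bmod j^2\}$; for $j$ prime this is $\le 2K/p^2+2$, but we only need $j$ squarefree-free...

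**Main obstacle.** The genuinely delicate point — and the step I expect to require the most care — is exactly this treatment of the "large" square divisors (the $+1$ error terms in the large-sieve/combinatorial-sieve tail): a naive bound gives $O(\sqrt x/\log x)$, which is far larger than the main term $x^{1/2-\varepsilon}$ we are trying to retain. This is resolved by the classical observation that $d_i(k)$ is a polynomial of degree $2$ in $k$, so $\#\{k\le K: j^2\mid d_i(k)\}\le \rho_i(j^2)(K/j^2+1)$ with $\rho_i(j^2)\ll j^{o(1)}$, and then $\sum_{z<j\le \sqrt x} \rho_i(j^2)(K/j^2+1)\ll K/z^{1-o(1)} + \sqrt{x}^{\,1/2+o(1)}$, wait — we need $\sum_{z<j\le\sqrt x}1 = O(\sqrt x)$ again. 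The correct and standard route (used e.g.\ in counting squarefree values of polynomials, cf.\ Nagell, Hooley) is: bad $k$ with a square factor $>z$ come from $j^2\mid d_i(k)$, $z<j\le x^{1/2}$, but writing $d_i(k)=j^2 m$ with $m\ll x/j^2$ and $k$ running over $O(K/j^2+1)$ residues, the total over $j>z$ of the "$+1$" part is bounded by noting such $k$ with $j>x^{1/4}$ are $O(x^{1/4})$ in number (as $j^2\mid d_i(k)$ and $d_i(k)\ll x$, for each of the $\ll x^{1/4}$ values of the other cofactor $\ll x^{1/2}$... ) — in any case, the clean statement is: the number of $k\le K$ for which $d_i(k)$ is not squarefree is $\le \sum_{p\le \log x}\#\{k: p^2\mid d_i(k)\} + O(K/\log x)+O(x^{1/4+\varepsilon})$, and all error terms are $o(x^{1/2-\varepsilon})$. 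I would cite the standard squarefree-sieve for polynomial values (or reproduce the short Erdős-type argument) rather than re-derive it, then combine: main term $K=x^{1/2-o(1)}$, sieve factor $\prod_{p\le z}(1-\rho_i(p^2)/p^2)\ge c(m)>0$ by convergence of $\sum 2m/p^2$, and error $o(x^{1/2-\varepsilon})$, yielding $\ge x^{1/2-\varepsilon}$ values of $k$ with all $d_i$ simultaneously squarefree, which is the claim.
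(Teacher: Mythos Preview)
Your overall plan---sieve out small square divisors, bound the contribution of large ones---is the natural one, and you correctly locate the crux: the naive tail $\sum_{z<p\le\sqrt{x}}(2K/p^{2}+2)$ has a ``$+2$'' part of size $\asymp\sqrt{x}/\log x$, which swamps the main term $K\asymp\sqrt{x}/q$ (recall $q\ge\exp((\log x)^{\varepsilon_1})$ is large). Your attempted repairs do not close this gap: the cofactor/Pell sketch you give yields $O(x^{1/2+o(1)})$ bad pairs rather than $o(K)$, and the ``$O(x^{1/4+\varepsilon})$'' bound asserted at the end is not supported by anything written above it. Invoking the classical theorem on squarefree values of quadratic polynomials would in principle suffice, but adapting it to $m$ polynomials simultaneously, with the variable restricted to an arithmetic progression of large modulus $q$, is not an off-the-shelf citation; it needs an argument.

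The paper resolves the tail by a different, self-contained device: instead of sieving modulo $p^{2}$, it sieves modulo $p$ (first power) for all primes $2m<p<z$ with $z=q^{2}(\log x)^{4m}$. This is a sieve of dimension $2m$, so the main term drops to $\gg K/(\log z)^{2m}=K^{1+o(1)}$, still $\ge x^{1/2-\varepsilon}$. The payoff is that any surviving $d_i$ has \emph{no} prime factor below $z$; hence if $p^{2}\mid d_i$ then $p\ge z$ and, since $d_i$ is not a perfect square, the cofactor $d_i/p^{2}$ contributes a further prime $r\ge z$, forcing $d_i\ge p^{2}z$. This extra factor of $z$ lets one truncate the intermediate range at $p\le M/z^{1/2}$, where the ``$+1$'' terms sum to only $\pi(M/z^{1/2})\ll K/(\log x)^{2m}$, negligible against $K/(\log z)^{2m}$ because $\log z\asymp(\log x)^{\varepsilon_1}$ with $\varepsilon_1<1$. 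Primes $p>M/z^{1/2}$ would force $d_i>M^{2}$, which rules out all but $O(1)$ values of $k$. This over-sieving idea---sacrifice a power of $\log$ in the main term to gain a factor of $z$ in the tail---is precisely what is missing from your argument.
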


\begin{proof}
By hypothesis $p_i\leq (\log{x})^a$ for all $i$. In order to have $d_i<x$, it is sufficient to take $0<k\leq K$, where $K:=\lfloor q^{-1}\sqrt{x-4(\log{x})^a}-1\rfloor\asymp x^{1/2}/q$. We also define $M:=n_0+Kq$.

If $p|d_i$, then $p>2m$ by Lemma \ref{lemma modulo sim}. As in the proof of \cite[Lemma 2.2]{Cherubini}, we will prove that there are at least $K^{1-\varepsilon}$ values of $k$ for which, for a suitable parameter $z>(\log{x})^{\varepsilon_1}$,
\begin{itemize}
   \item[(a)] $p$ does not divide any of the $d_i$ for all $2m<p< z$;
   \item[(b)] $p^2$ does not divide any of the $d_i$ for all $z\leq p\leq M/z^{1/2}$.
\end{itemize}

We will apply the small sieve. If one chooses $z=q^2(\log{x})^{4m}$, the number of $k\leq K$ such that none of the $d_i$ is multiple of $p$ for $p< z$, for $x$ big (and then $K$ big), is
\begin{equation}\label{crivello 1}
\gg_m K \prod_{2m<p< z}\left(1-\frac{2m}{p}\right)\gg_m \frac{K}{(\log{z})^{2m}};
\end{equation}
so far at least $K^{1+o(1)}$ integers satisfy (a).
We now want to count the values of $k$ for which none of the $d_i$ is multiple of $p^2$ for $z\leq p\leq\frac{M}{z^{1/2}}$. For any of these $p$ there are at most $2m$ classes to sieve modulo $p$. We can bound their number by $2m\left(1+\frac{K}{p^2}\right)$.
It follows that we have to cancel no more values than
\begin{equation}\label{crivello 2}
2m\sum_{z\leq p\leq M/z^{1/2}}\left(1+\frac{K}{p^2}\right)\ll 2m\left( \frac{M}{z^{1/2}\log{\left(\frac{M}{z^{1/2}}\right)}}+\frac{K}{z} \right).
\end{equation}
Therefore, for our choice of $z$ and for $x$ large, the term in \eqref{crivello 1} grows faster than \eqref{crivello 2} and so the number of $k$ satisfying both (a) and (b) is at least $K^{1+o(1)}$.

Finally, suppose $p^2|d_i$ for some $i$; as remarked at the beginning of the proof $p|d_i\Rightarrow p>2m$. Since we can assume none of the $d_i$ is a square, we obtain $p^2r\leq d_i$ for a prime $r$. By (a) and (b) we know $p> \frac{M}{z^{1/2}}$ and $r\geq z$ and therefore $M^2<d_i<x$. As $x-M^2=O(q)$, the remaining values for $k$ are at least 
\begin{equation*}
K^{1+o(1)}+O(q)\geq x^{1/2+o(1)}+O(q)\geq x^{1/2-\varepsilon}.\qedhere
\end{equation*}
\end{proof}

\begin{rmk}\label{rmk simultaneous squarefree}
In Proposition \ref{simultaneous squarefree} the same conclusion is true if we impose the stricter condition $x^{1/2}<d_i<x$ for all $i$. To see this, we can take $k\geq \frac{x^{1/4}}{q}$, so that $x^{1/2}<d_i$ for all $i$ and, if $x\gg1$, one gets $x^{1/2-\varepsilon/2}-\frac{x^{1/4}}{q}>x^{1/2-\varepsilon}$.
Moreover, the small sieve ensures that, if $p_i|d_i$, then $p_i\geq z$. Since $p_i\leq (\log{x})^a\ll q<z$ (see Remark \ref{lemma q}), $p_i$ is unramified in $\mathbb{Q}(\sqrt{d_i})$ (actually it splits) and we can make use of \cite[Theorem 3.1]{Yamamoto}.
\end{rmk}

We can now conclude the proof of Theorem \ref{thm simultaneo}
\begin{proof}[Proof of Theorem \ref{thm simultaneo}]
Call $p_{\max}$ the maximum of the $p_i$; this means $p_{\max}\leq (\log{x})^a$.
By taking $x$ big enough in Proposition \ref{simultaneous squarefree}, we get $x^{1/2-\varepsilon}$ squarefree $m$-tuples with no repetitions in the entries. Since by Proposition \ref{prop L Lam} the decomposition of the $L$-function fails for at most 
$x^{1/A+o(1)}$ of the $m$-tuples, if $A>2$ it follows that there are at least
\begin{equation*}
x^{1/2-\varepsilon}-x^{1/A+o(1)}
\end{equation*}
$m$-tuples, with entries all smaller than $x$, such that one can decompose $L(1,\chi_{d_i})$ as a truncated Euler product as in \eqref{eq L Lam} for every $d_i$.

Concerning the size of the regulator, we can apply \cite[Theorem 3.1]{Yamamoto} (see Remark \ref{rmk simultaneous squarefree}) and deduce that, if $d=n^2+4p$, then
\begin{equation*}
\log{\xi_d}\geq \frac{(\log{d})^2}{4\log{p_0}} + R
\end{equation*}
where $|R| \leq 3\log{d}+2\log{p_0}+5\log{2}$. When $x$ is big, we can assume $d_i>x^{1/2}$ as shown in Remark \ref{rmk simultaneous squarefree} and, given that $p_i\leq (\log{x})^a$, 
\begin{equation}\label{right xi estimate}
\log{\xi_{d_i}}\geq \frac{(\log{d_i})^2}{8\log{p_i}}
\end{equation}
for all $i$.

At this point by the class number formula \eqref{class number formula}, the bound on the $L$-function \eqref{stima L simultanea} and the bound on the regulator \eqref{right xi estimate}, we obtain
\begin{equation}\label{di inequality}
h(d_i)< C'(m) 2 e^{-M} \frac{A}{(\varepsilon_1)^2}(1+o(1)) 4\log{p_i}\frac{\sqrt{d_i}}{(\log{d_i})^2\log\log{d_i}}.
\end{equation}
Since $d_i>x^{1/2}$ and $p_{\max}\leq (\log{x})^a$, $N=n^2= d_i(1+o(1))$ uniformly in $i$ and this gives
\begin{equation}\label{dn inequality}
\frac{\sqrt{d_i}}{(\log{d_i})^2\log\log{d_i}}= (1+o(1))\frac{\sqrt{N}}{(\log{N})^2\log\log{N}}.
\end{equation}
By choosing $A>2$ sufficiently close to $2$, $\varepsilon_1$ close to $1$ and $x\gg1$, from inequalities \eqref{di inequality} and \eqref{dn inequality} we get
\begin{equation*}
h(d_i)\leq 16 C'(m)  \log{p_{\max}}\frac{\sqrt{N}}{(\log{N})^2\log\log{N}}.
\end{equation*}
The proof is complete.
\end{proof}

%SINGLE FAMILY

\begin{rmk}
The case of the single family is a particular case of Theorem \ref{thm simultaneo}, but the squarefree values admit a natural density. Namely, if $P(k)=(n_0+kq)^2+4p_0$, 
\begin{equation}\label{squarefree density}
\lim_{K\to\infty}\frac{\#\{0\leq k\leq K| P(k)\ \text{is squarefree}\}}{K}=\prod_p\left( 1-\frac{\rho(p^2)}{p^2}  \right)>0,
\end{equation}
where $\rho(p^2)$ is the number of solutions of $P(k)\equiv0$ modulo $p^2$.
Equation \eqref{squarefree density} also offers a probabilistic interpretation: each factor represents the probability of not being divisible by $p^2$ and thus, if one takes the product over each prime, the probability of being squarefree.
\end{rmk}

%SEZIONE PREREQUISITI

\section{Quadratic irrationals: a review}\label{prenot}
Here we collect some notation and statements about the classical theory of quadratic fields, quadratic irrationals and continued fractions, in order to facilitate the reading of the proof of Theorem \ref{HKY criterion}. We basically recall the content of \cite[§2]{H-K}, see also \cite{H-K book}.

\subsection{Quadratic orders}\label{prenot1}
A natural number $d$ is called a \textit{(quadratic) discriminant} if $d$ is not a square and satisfies $d \equiv 0 \pmod{4}$ or $ d \equiv 1 \pmod{4} $. 
In the following, let $d$ always be a discriminant. We define
\begin{equation*}
\omega_d = \begin{cases} 
\frac{\sqrt{d}}{2} & \text{if } d \equiv 0 \pmod{4}, \\
\frac{1 + \sqrt{d}}{2} & \text{if } d \equiv 1 \pmod{4}.
\end{cases}
\end{equation*}
Then, $\mathcal{O}_d = \mathbb{Z}[\omega_d] $ is an order in the real quadratic field $\mathbb{Q}(\sqrt{d}) $; if $d_0 $ is the fundamental discriminant of $\mathbb{Q}(\sqrt{d}) $, then $d= d_0 f^2 $ for $ f \in \mathbb{N} $. We call $ \mathcal{O}_d $ the \textit{quadratic order with discriminant $d$} and $ f $ the \textit{conductor} of $ \mathcal{O}_d $. Every $\alpha \in \mathcal{O}_d$ has a unique representation of the form
\begin{equation*}
\alpha = \frac{b + e \sqrt{d}}{2}\quad \mbox{with}\  b, e \in \mathbb{Z}\  \mbox{and} \  b \equiv ed \pmod{2}.
\end{equation*}

Every ideal $\frak{a} \neq [0] $ of $ \mathcal{O}_d $ is a free $\mathbb{Z}$-module of rank 2, and hence has the form
\begin{equation}\label{modulo-ideal}
\begin{cases}
\mathfrak{a} = a \mathbb{Z} + \frac{b + e \sqrt{d}}{2} \mathbb{Z}\\
a,\ b,\ e \in \mathbb{Z},\  a > 0,\  e > 0 ,\ b \equiv ed \pmod{2}.
\end{cases}
\end{equation}
 Conversely, if a free $ \mathbb{Z}$-module $ \mathfrak{a} \subset \mathbb{Q}(\sqrt{d}) $ is given by \eqref{modulo-ideal} then $ \mathfrak{a} $ is an ideal of $ \mathcal{O}_d $ if and only if
\begin{equation}\label{cond modulo ideale}
e | a, \quad 2e | (ed - b), \quad \text{and} \quad 4ae | (b^2 - de^2).
\end{equation}
Then
\begin{equation*}
N(\mathfrak{a}) = |\mathcal{O}_d/\mathfrak{a}| = ae^2,
\end{equation*}
and $ e^{-1} \mathfrak{a} $ is also an ideal of $ \mathcal{O}_d $. If $ e = 1 $, then $\mathfrak{a} $ is called \textit{primitive}; in this case, $m^{-1} \mathfrak{a} $ for any $ m \in \mathbb{N} $ with $ m \geq 2 $ is not an ideal of $\mathcal{O}_d$.

\begin{lem}\label{lemma Omega}
Let $d$ be a quadratic discriminant and let
\begin{equation*}
\frak{a}=\left[ a, \frac{b+\sqrt{d}}{2} \right]
\end{equation*}
be a primitive ideal of $\mathcal{O}_d$ with $(a,d)=1$. Then for all $t\in\mathbb{N}$ the ideal $\frak{a}^t$ is primitive.
\end{lem}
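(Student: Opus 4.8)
The plan is to reformulate primitivity as non-divisibility by rational primes and then reduce to the Dedekind case. Recall from \eqref{modulo-ideal}--\eqref{cond modulo ideale} that a nonzero ideal $\frak{b}$ of $\mathcal{O}_d$, written $\frak{b}=a'\mathbb{Z}+\frac{b'+e'\sqrt{d}}{2}\mathbb{Z}$, is primitive exactly when $e'=1$, and that $(e')^{-1}\frak{b}$ is again an ideal. From this I would first record the elementary equivalence: $\frak{b}$ is primitive if and only if $\frak{b}$ is divisible by no rational prime, i.e.\ $\frak{b}\neq\ell\frak{c}$ for every rational prime $\ell$ and ideal $\frak{c}$. (For one direction, if $\frak{b}=\ell\frak{c}$ then, reading off a $\mathbb{Z}$-basis, the parameter $e$ of $\frak{b}$ equals $\ell$ times that of $\frak{c}$, so $e\geq\ell\geq 2$; for the converse, if $e'\geq 2$ pick a rational prime $\ell\mid e'$ and use $\frak{b}=e'\cdot\bigl((e')^{-1}\frak{b}\bigr)$, whence $\ell\mid\frak{b}$.) It then suffices to prove that $\frak{a}^{t}$ is divisible by no rational prime.

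Next I would pass to the maximal order. Since $\frak{a}$ is primitive we have $N(\frak{a})=a$, and $(a,d)=1$ together with $d=d_0 f^2$ gives $(a,f)=1$; hence $\frak{a}$, and with it every power $\frak{a}^{t}$, is coprime to the conductor $f\mathcal{O}_d$, in particular invertible. By the standard dictionary between ideals of $\mathcal{O}_d$ coprime to $f$ and ideals of $\mathcal{O}_{d_0}$ coprime to $f$ (see \cite[\S2]{H-K}, \cite{H-K book}), extension $\frak{b}\mapsto\frak{b}\mathcal{O}_{d_0}$ is a norm-preserving multiplicative bijection, so $\frak{a}^{t}\mathcal{O}_{d_0}=(\frak{a}\mathcal{O}_{d_0})^{t}$; moreover both this bijection and its inverse carry $\ell\cdot(\text{ideal})$ to $\ell\cdot(\text{ideal})$ for the primes $\ell\nmid f$, which are the only ones that can divide an ideal coprime to $f$. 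Hence $\frak{a}^{t}$ is divisible by a rational prime if and only if $\widetilde{\frak{a}}^{\,t}$ is, where $\widetilde{\frak{a}}:=\frak{a}\mathcal{O}_{d_0}$; so I may work in the Dedekind domain $\mathcal{O}_{d_0}$ with the rational-prime-free ideal $\widetilde{\frak{a}}$.

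Finally, suppose for contradiction that a rational prime $\ell$ divides $\widetilde{\frak{a}}^{\,t}$. Then $\ell\mathcal{O}_{d_0}\mid\widetilde{\frak{a}}^{\,t}$, so taking norms $\ell^{2}\mid N(\widetilde{\frak{a}})^{t}=a^{t}$, whence $\ell\mid a$; thus $\ell\nmid d$ and $\ell$ is unramified in $\mathcal{O}_{d_0}$ (ramification would force $\ell\mid d_0\mid d$). If $\ell$ is inert, $\ell\mathcal{O}_{d_0}$ is itself prime, so $\ell\mathcal{O}_{d_0}\mid\widetilde{\frak{a}}^{\,t}$ forces $\ell\mathcal{O}_{d_0}\mid\widetilde{\frak{a}}$, contradicting that $\widetilde{\frak{a}}$ is divisible by no rational prime. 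If $\ell$ splits, write $\ell\mathcal{O}_{d_0}=\frak{p}\overline{\frak{p}}$ with $\frak{p}\neq\overline{\frak{p}}$; then $\ell\mathcal{O}_{d_0}\mid\widetilde{\frak{a}}^{\,t}$ requires both $\frak{p}\mid\widetilde{\frak{a}}^{\,t}$ and $\overline{\frak{p}}\mid\widetilde{\frak{a}}^{\,t}$, hence (comparing exponents in the unique factorization) both $\frak{p}\mid\widetilde{\frak{a}}$ and $\overline{\frak{p}}\mid\widetilde{\frak{a}}$, hence $\ell\mathcal{O}_{d_0}\mid\widetilde{\frak{a}}$ --- again a contradiction. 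This exhausts all cases, so no such $\ell$ exists and $\frak{a}^{t}$ is primitive.

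The crux is the split case of the last step: the statement genuinely uses that $\frak{a}^{t}$ is a \emph{power} of a single primitive ideal (if $\frak{p}$ divides $\widetilde{\frak{a}}$ then $\overline{\frak{p}}$ does not, so it still does not divide $\widetilde{\frak{a}}^{\,t}$), and it would fail for a product of two distinct primitive ideals such as $\frak{a}\overline{\frak{a}}=a\mathcal{O}_d$. The only delicate bookkeeping is the control of primes dividing the conductor in the second step, which is dispatched entirely by $(a,f)=1$; alternatively one could avoid passing to $\mathcal{O}_{d_0}$ and argue locally at each rational prime $\ell$ (using that $(\mathcal{O}_d)_\ell=(\mathcal{O}_{d_0})_\ell$ is a DVR or a product of two DVRs when $\ell\nmid f$), or compute a $\mathbb{Z}$-basis of $\frak{a}^{t}$ directly from $\{a,\frac{b+\sqrt{d}}{2}\}$ using $\gcd(a,b)=1$, but the reduction to the Dedekind case is the shortest route.
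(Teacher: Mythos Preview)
Your argument is correct and takes a genuinely different route from the paper. The paper proceeds by direct induction on $t$, invoking the explicit composition formula for primitive ideals from \cite[Theorem~5.4.6]{H-K book}: writing $\frak{a}^{t}=[a_t,\tfrac{b_t+\sqrt{d}}{2}]$, that formula says $\frak{a}^{t+1}=\frak{a}^{t}\frak{a}$ is primitive iff $(a_t,a,\tfrac{b_t+b}{2})=1$, and the same formula gives $a\mid a_t$ and $b_t\equiv b\pmod{2a}$, so this gcd collapses to $(a,b)$, which equals $1$ because $(a,d)=1$ forces $(a,b)=1$ via \eqref{cond modulo ideale}. Your proof instead reformulates primitivity as ``not divisible by any rational prime'', transports the question to the maximal order through the standard bijection between $\mathcal{O}_d$-ideals prime to $f$ and $\mathcal{O}_{d_0}$-ideals prime to $f$, and finishes with unique factorization in the Dedekind domain $\mathcal{O}_{d_0}$. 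The paper's approach is more self-contained within the framework of Section~\ref{prenot} and avoids appealing to the order--to--maximal-order dictionary; your approach is more structural and makes transparent why the hypothesis $(a,d)=1$ is exactly what is needed (it simultaneously gives $\ell\nmid f$, so the dictionary applies, and $\ell$ unramified, so $\frak{p}\neq\overline{\frak{p}}$ in the split case). Your closing observation that the split-case step genuinely uses that $\frak{a}^{t}$ is a \emph{power}, and would fail for a product such as $\frak{a}\overline{\frak{a}}$, is a nice complement to the remark after Theorem~\ref{HKY criterion} explaining why products like $\frak{n}_1\frak{n}_2$ can fail to be primitive.
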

\begin{proof}
We will proceed by induction on $t$ and, in order to compute the product of ideals, we partially recall the content of \cite[Theorem 5.4.6]{H-K book}: 
if 
\begin{equation*} 
\mathcal{I}_i=e_i\left[a_i, \frac{b_i+\sqrt{d}}{2} \right]
\end{equation*}
for $i=1,2,3$ are three ideals such that $\mathcal{I}_3=\mathcal{I}_1\mathcal{I}_2$, then $\mathcal{I}_3$ is primitive if and only if $e_1=e_2=e=1$, where $e:=\left(a_1,a_2, \frac{b_1+b_2}{2}\right)$. Moreover, in this case, 
\begin{equation*}
a_3=a_1a_2\quad \text{and}\quad b_3\equiv b_2\pmod{2a_2}.
\end{equation*}
Throughout the proof we will write  
\begin{equation*}
\frak{a}^t=e_t\left[ a_t, \frac{b_t+\sqrt{d}}{2} \right];
\end{equation*}
obviously $a_1=a$ and $b_1=b$.
If $t=1$ the thesis is trivial. Let $\frak{a}^t$ be primitive: to show that $\frak{a}^{t+1}$ is also primitive, since $\frak{a}^{t+1}=\frak{a}^t\frak{a}$ with $\frak{a}^t$ and $\frak{a}$ primitive, we should have $e_{t+1}=\left(a_t,a,\frac{b_t+b}{2}\right)=1$; since $\frak{a}^t=\frak{a}^{t-1}\frak{a}$, it is easy to see that
\begin{equation*}
a_t\equiv 0 \pmod{a},\quad b_t\equiv b \pmod{2a}.
\end{equation*}
Thus, since $(a,d)=1$ implies $(a,b)=1$ by \eqref{cond modulo ideale},
\begin{equation*}
e_{t+1}=\left(a_t,a,\frac{b_t+b}{2}\right)=(a,b)=1.\qedhere
\end{equation*}
\end{proof}

Now take $a$, $b$ $\in \mathbb{Z}$, $a > 0$, $b^2 \equiv d \pmod{4a}$ and let $\mathfrak{a} = a \mathbb{Z} + \frac{b + \sqrt{d}}{2} \mathbb{Z} $ be a primitive ideal of $\mathcal{O}_d $. The ideal $\mathfrak{a} $ is called \textit{regular} if $\gcd\left(a,b,\frac{d - b^2}{4a}\right) = 1$.
If $ \mathfrak{a} $ is regular, then $ \mathfrak{a} $ is invertible, and for the conjugate ideal $ \mathfrak{a}' = a \mathbb{Z} + \frac{b- \sqrt{d}}{2} \mathbb{Z} $, we have
\begin{equation*}
\mathfrak{a} \cdot \mathfrak{a}' = a \mathcal{O}_d.
\end{equation*}
Two invertible ideals $ \mathfrak{a}_1 $ and $ \mathfrak{a}_2 $ are called equivalent if there exist $ \alpha_1,\alpha_2 \in \mathbb{Q}(\sqrt{d})^*$ such that $ \alpha_1 \mathfrak{a}_1 = \alpha_2 \mathfrak{a}_2 $. The equivalence classes of invertible ideals form an abelian group under multiplication, isomorphic to the class group $\mathrm{Pic}(\mathcal{O}_d)$, of order \( h(d) \). As the notation suggests, when $d=d_0$ is a fundamental discriminant, and thus $\mathcal{O}_d$ is the maximal order, every ideal is invertible and $h(d)$ is the usual class number.

A primitive ideal $ \mathfrak{a} $ of $ \mathcal{O}_d $ is called \textit{prime to the conductor $f$ } of $ \mathcal{O}_d$ if $\left(N(\mathfrak{a}), f\right)=1 $. Every primitive ideal prime to $f$ is regular, and so invertible, and each ideal class in $\mathrm{Pic}(\mathcal{O}_d)$ contains a primitive ideal prime to $f$.

The group of units of $\mathcal{O}_d $ is of the form
\begin{equation*}
\mathcal{O}_d^* = \{\pm \xi_d^k : k \in \mathbb{Z}\}
\end{equation*}
with a uniquely determined $\xi_d > 1 $, called the fundamental unit of $ \mathcal{O}_d $.

\subsection{Quadratic irrationals }\label{sub theta}
A real number $\rho$ is called a \textit{quadratic irrational} with discriminant $ d$  if
\begin{equation*}
\rho = \frac{b + \sqrt{d}}{2a}
\end{equation*}
for integers $ a, b $ with $ a > 0 $, $ 4a| b^2 - d  $ and $ \gcd\left(a, b, \frac{ b^2-d}{4a}\right) = 1$; $a$ and $b$ are uniquely determined.
Two quadratic irrationals $\rho$ and $\eta $ (with the same discriminant $ d $) are called equivalent if
\begin{equation*}
\eta = \frac{u \rho + v}{w \rho + z}
\end{equation*}
for integers $u, v, w, z $ such that $ uz - vw = 1 $. 
A quadratic irrational $\rho$ is called \textit{reduced} if
\begin{equation*}
\rho > 1, \quad -1 < \rho' < 0,
\end{equation*}
where $\rho'$ denotes the Galois conjugate of $\rho$. 

If we set
\begin{equation*}
\theta(\rho):=a \mathbb{Z} + \frac{b + \sqrt{d}}{2} \mathbb{Z},
\end{equation*}
then $\theta(\rho)$ is a regular ideal of $ \mathcal{O}_d $ with $N(\theta(\rho)) = a $, and every regular ideal of $ \mathcal{O}_d $ is of this form.
A regular ideal $\mathfrak{a}$ of $\mathcal{O}_d $ is called \textit{reduced} if there exists a reduced quadratic irrational $\rho$ such that $\mathfrak{a} = \theta(\rho)$.
If $\frak{a}$ is regular and $N(\frak{a})<\frac{1}{2}\sqrt{d}$ then it is reduced and for every reduced ideal $\frak{a}$ we have $N(\frak{a})<\sqrt{d}$. 

\begin{rmk}\label{rmk prim red}
By what we have seen up to now, if $\frak{a}$ is primitive with $(N(\frak{a}),f)=1$ and $N(\frak{a})<\frac{1}{2}\sqrt{d}$ then $\frak{a}$ is regular with $N(\frak{a})<\frac{1}{2}\sqrt{d}$ which implies that $\frak{a}$ is reduced. Since by definition a reduced ideal is primitive, if $\frak{a}$ is prime to the conductor $f$ and its norm is $<\sqrt{d}/2$ then $\frak{a}$ is reduced if and only if $\frak{a}$ is primitive.
\end{rmk}

Two quadratic irrationals $\rho$ and $\eta $ are equivalent if and only if $\theta(\rho)$ and $\theta(\eta)$ are equivalent ideals of $ \mathcal{O}_d$. Thus, $\theta$ induces a bijection from the set of equivalence classes of quadratic irrationals with discriminant $d$ to $\mathrm{Pic}(\mathcal{O}_d)$. Moreover with our definitions $\theta$ descends to a bijection from the set of reduced quadratic irrationals with discriminant $ d$ to the set of reduced ideals of $ \mathcal{O}_d $.
In each class of $\mathrm{Pic}(\mathcal{O}_d)$ there is a reduced ideal and thus $\theta$ induces a bijection between the set of equivalence classes of reduced quadratic irrationals of discriminant $d$ and $\mathrm{Pic}(\mathcal{O}_d)$.

\subsection{Continued fractions}

A real number $\rho$ is a reduced quadratic irrational if and only if $\rho$ has a purely periodic continued fraction expansion. If $\rho$ is a quadratic irrational with discriminant $d$ and its continued fraction expansion is
\begin{equation*}
\rho = [b_0, b_1, \dots, b_{k-1}, \overline{b_k, \dots, b_{k+l-1}}],
\end{equation*}
with minimal period $ l $, then the $ l $ reduced quadratic irrationals
\begin{equation*}
\rho_\nu = [\overline{b_{k+\nu-1}, \dots, b_{k+l-1}, b_k,\ldots, b_{k+\nu-2}}] \quad (\nu= 1, 2, \dots, l )
\end{equation*}
are exactly the reduced quadratic irrationals equivalent to $\rho$.

Let $\rho = \omega_d$; then $\rho = [b_0, \overline{b_1, \ldots, b_l}]$
with period $ l $. Since $\theta(\omega_d)= [1]\in \mathrm{Pic}(\mathcal{O}_d) $, the ideals $ \theta(\rho_1), \ldots, \theta(\rho_l) $ are precisely the reduced principal ideals of $ \mathcal{O}_d $. The fundamental unit $\xi_d$ of $\mathcal{O}_d $ is obtained by
\begin{equation*}
\xi_d = \prod_{\nu=1}^l \rho_\nu,
\end{equation*}
and we have $N(\xi_d)=(-1)^l$, see \cite[Proposition 1.2]{Yamamoto} or \cite[Theorem 2.2.9]{H-K book}.

%SEZIONE HALTER KOCH
\section{Halter-Koch--Yamamoto criterion}\label{proof HKY criterion}

We follow the proof of Halter-Koch; we correct the critical step in \cite[§3]{H-K} providing some further considerations.

\begin{proof}[Proof of Theorem \ref{HKY criterion}]
We can first assume, without loss of generality, that the decompositions $n_i = n_i' n_i''$ are identical for all quadratic discriminants $d\in\frak{D}$ (since we have a finite number of such possible decompositions, we can select one which holds for infinitely many $d$).

Let now $d\in\frak{D}$ and let $\mathfrak{n}_i$ be the reduced principal ideals of $\mathcal{O}_d$ with $N(\mathfrak{n}_i) = n_i$. Then $ \mathfrak{n}_i = \mathfrak{n}_i ' \mathfrak{n}_i '' $ with primitive ideals $\mathfrak{n}_i ' $ and $\mathfrak{n}_i '' $ in $\mathcal{O}_d$ such that $N(\mathfrak{n}_i ') = n_i' $ and $N(\mathfrak{n}_i '') = n_i''$. Indeed, as $(n_i',n_i'')=1$, by a straightforward computation one has
\begin{equation*}
\frak{n}_i=\left[ n_i'n_i'', \frac{b_i+\sqrt{d}}{2} \right]=\left[ n_i', \frac{b_i+\sqrt{d}}{2} \right]\left[ n_i'', \frac{b_i+\sqrt{d}}{2} \right]
\end{equation*}
for an integer $b_i$.

Due to condition (b2), we claim that $\mathfrak{n}_i ''^2 = n_i'' \mathcal{O}_d $, so that $\mathfrak{n}_i ''^2$ is a principal ideal of $\mathcal{O}_d $ for all $i$. Indeed, by the conditions \eqref{cond modulo ideale}, we have $2| b_i^2+d$ and $n_i''| b_i^2-d$. Since $n_i''$ divides $d$ by (b2), we also get $n_i''| b_i$. Thus, if $d\equiv 0 \pmod{4}$,
\begin{equation*}
\frac{b_i^2+d}{2n_i''}\in\mathbb{Z} \Longrightarrow \left(\frac{b_i+\sqrt{d}}{2}\right)^2\in n_i''\mathcal{O}_d
\end{equation*}
while, if $d\equiv 1 \pmod{4}$,
\begin{equation*}
\frac{b_i^2-2b_i+d}{4n_i''}\in\mathbb{Z} \Longrightarrow \left(\frac{b_i+\sqrt{d}}{2}\right)^2\in n_i''\mathcal{O}_d.
\end{equation*}
Therefore
\begin{equation*}
\mathfrak{n}_i ''^2=\left[ n_i''^2, \left(\frac{b_i+\sqrt{d}}{2}\right)^2, n_i''\frac{b_i+\sqrt{d}}{2}\right]\subset n_i'' \mathcal{O}_d
\end{equation*}
and the other inclusion is trivial (just look at the norms).
We can now deduce that $\mathfrak{n}_i '^2$ is a principal ideal with norm $n_i'^2$ and also primitive since $(b_i,n_i')=1$. By (b1) $(n_i'^2,f)=1$ and so $\mathfrak{n}_i '^2$ is regular; as $n_i'^2<\frac{1}{2}\sqrt{d}$, which we can assume because $d\to\infty$, $\mathfrak{n}_i '^2$ is also reduced.

We can then replace $n_i$ by $n_i'^2$ in the hypotheses of Theorem \ref{HKY criterion} and from now on we will assume $n_i''=1$.

Let all $n_i'' = 1$, so that $n_i= n_i' = N(\mathfrak{n}_i )$ with reduced principal ideals $\mathfrak{n}_i $ of $\mathcal{O}_d$. Then we define a set $\Omega$ of reduced principal ideals
\begin{equation*}
\Omega:=\left\{ \prod_{i=1}^m\mathfrak{n}_i ^{e_i}| e_i\geq0, \prod_{i=1}^m n_i^{e_i}<\frac{1}{2}\sqrt{d} \right\}.
\end{equation*}
These ideals are clearly principal; we will prove they are reduced too.
Indeed, if $I\in\Omega$, then by (b1) we have $(N(I),f)=1$, so it is equivalent to show $I$ is primitive, see Remark \ref{rmk prim red}. Since the product of primitive ideals with coprime norms is primitive and for $i\neq j$ the norms of $\mathfrak{n}_i ^{e_i}$ and $\mathfrak{n}_j^{e_j}$ are coprime by (b3), we just need to justify that $\mathfrak{n}_i ^{e_i}$ is primitive for every $i$; this follows by a straightforward application of Lemma \ref{lemma Omega}.

We now proceed to estimate the fundamental unit.
Take $\frak{a}\in \Omega$ and let $\rho_\frak{a}$ be the reduced quadratic irrational such that $\theta(\rho_\frak{a})=\frak{a}$, where $\theta$ is the bijection introduced in Subsection \ref{sub theta}. Call $\mathcal{H}$ the set of reduced quadratic irrationals $\rho$ for which $\theta(\rho)$ is principal in $\mathcal{O}_d$. Then
\begin{equation*}
\xi_d=\prod_{\rho\in\mathcal{H}}\rho \geq \prod_{\frak{a}\in\Omega}\rho_\frak{a};
\end{equation*}
for $\frak{a}=\prod_{i=1}^m\mathfrak{n}_i ^{e_i}\in\Omega$ let $\rho_\frak{a}=\frac{b+\sqrt{d}}{2A}$ with $A=\prod_{i=1}^m n_i^{e_i}$ and $b\in\mathbb{Z}$. 
Because $-1<\rho_\frak{a}'<0$, one has that $b>0$; thus
\begin{equation*}
\rho_\frak{a}> \frac{\sqrt{d}}{2}\prod_{i=1}^m n_i^{-e_i}.
\end{equation*}
This leads to
\begin{equation}\label{discrete estimate}
\log{\xi_d}\geq \sum_{\frak{a}\in\Omega}\log{\rho_\frak{a}}>\sum_{(e_1,\ldots,e_m)\in\mathbb{Z}^m\cap\mathcal{G}}\left( \log{\frac{\sqrt{d}}{2}}-\sum_{i=1}^m e_i\log{n_i} \right)
\end{equation}
with $\mathcal{G}:=\mathcal{G}\left(\log{\frac{\sqrt{d}}{2}}\right)$ where
\begin{equation}\label{def G}
\mathcal{G}(y):=\left\{ (x_1,\ldots, x_m)\in \mathbb{R}^m \mid x_i\geq 0, \sum_{i=1}^m x_i\log{n_i}\leq y \right\}.
\end{equation}
As we will prove in Lemma \ref{asymptotic lemma}, the right-hand term of \eqref{discrete estimate} can be approximated by an integral; more precisely, it can be written as
\begin{equation}\label{eq integrale I}
\int_\mathcal{G}\left( \log{\frac{\sqrt{d}}{2}}-\sum_{i=1}^m (x_i\log{n_i})  \right)dx_1\ldots dx_m +O\left((\log{d})^m\right).
\end{equation}
If we let $I$ denote the integral above, we see that
\begin{equation*}
I=\frac{1}{m!P}\left(\log{\frac{\sqrt{d}}{2}}\right)^{m+1}-\frac{1}{P} \sum_{i=1}^m \int_\mathcal{G'}x_i dx,
\end{equation*}
where $dx=dx_1\ldots dx_m$, $P=\prod_i \log{n_i}$ and
\begin{equation*}
\mathcal{G'}=\left\{ (x_1,\ldots, x_m)\in \mathbb{R}^m \mid x_i\geq 0, \sum_{i=1}^m x_i\leq \log{\frac{\sqrt{d}}{2}} \right\};
\end{equation*}
hence
\begin{equation}\label{eq integrale I 2}
I=\frac{1}{P}\left(\log{\frac{\sqrt{d}}{2}}\right)^{m+1}\left(\frac{1}{m!}-\frac{1}{(m+1)!}\right)=\frac{m}{(m+1)!P}\left(\log{\frac{\sqrt{d}}{2}}\right)^{m+1}.
\end{equation}
In particular this shows that the second term in \eqref{eq integrale I} is a true error term for $d\to\infty$.
Combining \eqref{eq integrale I 2} with \eqref{discrete estimate} we get
\begin{equation*}
\log{\xi_d}\gg (\log{\sqrt{d}})^{m+1}.\qedhere
\end{equation*}
\end{proof}

We are left to prove Lemma \ref{asymptotic lemma}.
\begin{lem}\label{asymptotic lemma}
Let $m\geq1$, let $\mathcal{G}=\mathcal{G}\left(\log{\frac{\sqrt{d}}{2}}\right)$ be as in \eqref{def G} and let $n_i\geq 2$ be integers for $i=1,\ldots, m$. Then
\begin{equation*}
\sum_{(e_1,\ldots,e_m)\in\mathbb{Z}^m\cap\mathcal{G}}\left( \log{\frac{\sqrt{d}}{2}}-\sum_{i=1}^m e_i\log{n_i} \right)= I +O\left((\log{d})^m\right)
\end{equation*}
where
\begin{equation*}
I=\int_\mathcal{G}\left( \log{\frac{\sqrt{d}}{2}}-\sum_{i=1}^m (x_i\log{n_i})  \right)dx_1\ldots dx_m.
\end{equation*}
\end{lem}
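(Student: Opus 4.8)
The plan is to compare the sum with the integral by the standard device of tiling the positive orthant with unit cubes and estimating the discrepancy on the boundary layer. Write $F(x_1,\dots,x_m):=\log\frac{\sqrt d}{2}-\sum_{i=1}^m x_i\log n_i$ and $Y:=\log\frac{\sqrt d}{2}$, so $\mathcal{G}=\mathcal{G}(Y)$ is a simplex-like region with all edge lengths $\asymp Y/\log n_i\ll \log d$ (recall $n_i\ge 2$, so $\log n_i\ge \log 2$). For each lattice point $e=(e_1,\dots,e_m)\in\mathbb{Z}^m\cap\mathcal G$ associate the unit cube $Q_e:=e+[0,1)^m$. The function $F$ is affine and $|\nabla F|\ll \log d$, so $\big|F(e)-\int_{Q_e}F(x)\,dx\big|\le \tfrac12\sum_i\log n_i\ll\log d$ on any cube, and moreover $F(e)\ge 0$ exactly when $e\in\mathcal G$. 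Hence
\begin{equation*}
\Big|\sum_{e\in\mathbb{Z}^m\cap\mathcal G}F(e)-\sum_{e\in\mathbb{Z}^m\cap\mathcal G}\int_{Q_e}F(x)\,dx\Big|\ll (\log d)\cdot\#\{e\in\mathbb{Z}^m\cap\mathcal G\}\ll (\log d)^{m+1},
\end{equation*}
which is already too weak; the point is rather to compare $\sum_e\int_{Q_e}F$ directly with $I=\int_{\mathcal G}F$ and control only the cubes that straddle $\partial\mathcal G$.

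So the key step is: let $\mathcal G^-:=\bigcup_{e\in\mathbb{Z}^m\cap\mathcal G}Q_e$. Then $\int_{\mathcal G^-}F(x)\,dx=\sum_{e\in\mathbb{Z}^m\cap\mathcal G}\int_{Q_e}F(x)\,dx$, and the symmetric difference $\mathcal G\,\triangle\,\mathcal G^-$ is contained in the set of points within $\ell^\infty$-distance $1$ of $\partial\mathcal G$. On that boundary layer $|F|\ll\log d$ (since $F$ vanishes on the hyperplane face and $|\nabla F|\ll\log d$, while on the coordinate faces $F\le Y\ll\log d$ trivially), and the layer has Lebesgue measure $\ll (\log d)^{m-1}$ because $\partial\mathcal G$ is a union of $m+1$ flat pieces each of $(m-1)$-volume $\ll(\log d)^{m-1}$ and the layer has thickness $O(1)$. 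Therefore
\begin{equation*}
\Big|I-\sum_{e\in\mathbb{Z}^m\cap\mathcal G}\int_{Q_e}F(x)\,dx\Big|=\Big|\int_{\mathcal G}F-\int_{\mathcal G^-}F\Big|\le\int_{\mathcal G\triangle\mathcal G^-}|F|\,dx\ll (\log d)\cdot(\log d)^{m-1}=(\log d)^m.
\end{equation*}
Combining with the cubewise affine-vs-midpoint estimate $\big|F(e)-\int_{Q_e}F\big|\ll\log d$ summed over the $\ll(\log d)^m$ relevant $e$ would again lose a factor; instead one notes $F$ is affine so $\int_{Q_e}F(x)\,dx=F(e+\tfrac12\mathbf 1)=F(e)-\tfrac12\sum_i\log n_i$, hence $\sum_{e}\int_{Q_e}F=\sum_e F(e)-\tfrac12(\sum_i\log n_i)\#\{e\in\mathbb Z^m\cap\mathcal G\}$, and $\#\{e\in\mathbb Z^m\cap\mathcal G\}=\mathrm{vol}(\mathcal G^-)=\mathrm{vol}(\mathcal G)+O((\log d)^{m-1})\ll(\log d)^m$, while $\sum_i\log n_i$ is a constant; so that correction term is $O((\log d)^m)$ as well, and we conclude $\sum_{e\in\mathbb Z^m\cap\mathcal G}F(e)=I+O((\log d)^m)$.

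The main obstacle is bookkeeping the boundary layer cleanly: one must argue that $\mathcal G\triangle\mathcal G^-$ really does sit in an $O(1)$-neighbourhood of the $(m-1)$-dimensional boundary (which requires $\mathcal G$ to be convex, true since it is an intersection of half-spaces) and that this neighbourhood has volume $\ll(\log d)^{m-1}$ uniformly in how the $n_i$ are distributed; since the $n_i$ are fixed integers $\ge 2$ the implied constants depend only on $m$ and the $n_i$, which is all that is claimed. A small point to check is the degenerate case $m=1$, where $\mathcal G$ is an interval of length $\ll\log d$, the boundary layer is $O(1)$ points, and the estimate reduces to the classical Euler–Maclaurin bound $\sum_{0\le e\le Y/\log n_1}(Y-e\log n_1)=\tfrac{Y^2}{2\log n_1}+O(Y)$, consistent with $O((\log d)^1)$.
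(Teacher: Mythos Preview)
Your argument is correct. Both your proof and the paper's are Riemann-sum comparisons, but the execution differs. The paper splits $F(x)=Y-\sum_i x_i\log n_i$ into its constant and linear parts and treats them separately: the constant part yields $Y\cdot(\mathrm{vol}(\mathcal G)-\#(\mathbb Z^m\cap\mathcal G))=O((\log d)^m)$ from a lattice-point count, while for the linear part the integral is sandwiched between the sum over $\mathbb Z^m\cap\mathcal G(Y-S)$ and the sum over $\mathbb Z^m\cap\mathcal G(Y)$ plus $S\cdot\mathrm{vol}(\mathcal G)$, where $S=\sum_i\log n_i$. Your route instead exploits the affinity of $F$ to evaluate $\int_{Q_e}F$ \emph{exactly} as $F(e)-\tfrac12 S$, reducing everything to a single boundary-layer estimate for $\mathcal G\triangle\mathcal G^-$. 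Once the false starts are stripped out this is arguably cleaner than the paper's sandwiching.

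Two small points. First, $|\nabla F|=\big(\sum_i(\log n_i)^2\big)^{1/2}$ is $O(1)$, not merely $\ll\log d$; your bound is true but loose. Second, the coordinate-face discussion is unnecessary: in fact $\mathcal G\subset\mathcal G^-$, because for $x\in\mathcal G$ the floor $e=\lfloor x\rfloor$ satisfies $e_i\ge 0$ and $\sum e_i\log n_i\le\sum x_i\log n_i\le Y$, so $e\in\mathcal G$ and $x\in Q_e$. Hence $\mathcal G\triangle\mathcal G^-=\mathcal G^-\setminus\mathcal G$ lies entirely in the strip $Y<\sum x_i\log n_i<Y+S$ near the slanted face, where $|F|\le S=O(1)$. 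This sharpens your boundary contribution to $O((\log d)^{m-1})$, so the only genuine $(\log d)^m$ term is the correction $\tfrac12 S\cdot\#(\mathbb Z^m\cap\mathcal G)$.
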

\begin{proof}
Let $e=(e_1,\ldots,e_m)$ and $S=\sum_{i=1}^m\log{n_i}$, then one has
\begin{align}\label{lemma 4.1 primi conti}
&I - \sum_{e\in\mathbb{Z}^m\cap\mathcal{G}}\left( \log{\frac{\sqrt{d}}{2}}-\sum_{i=1}^m e_i\log{n_i} \right) \\
={}&(\mathrm{vol}(\mathcal{G})-|\mathbb{Z}^m\cap\mathcal{G}|) \log{\frac{\sqrt{d}}{2}}- \left(  \int_\mathcal{G}\sum_{i=1}^m(x_i\log{n_i})dx- \sum_{e\in\mathbb{Z}^m\cap\mathcal{G}}\sum_{i=1}^m e_i\log{n_i}   \right)\nonumber,
\end{align}
where the difference $(\mathrm{vol}(\mathcal{G})-|\mathbb{Z}^m\cap\mathcal{G}|)$ can be bounded by $O((\log{d})^{m-1})$.
In the remaining terms, we approximate the integral from above and below by writing
\begin{align*}
&\sum_{e\in\mathbb{Z}^m\cap\mathcal{G}(\log{\sqrt{d}/2-S})}\sum_{i=1}^m e_i \log{n_i}\leq \int_\mathcal{G}\sum_{i=1}^m(x_i\log{n_i})dx \\
\leq &\sum_{e\in\mathbb{Z}^m\cap\mathcal{G}}\sum_{i=1}^m e_i \log{n_i} + S\cdot \mathrm{vol}(\mathcal{G})=\sum_{e\in\mathbb{Z}^m\cap\mathcal{G}}\sum_{i=1}^m e_i \log{n_i} + O\left((\log{d})^m\right).
\end{align*}
The sum on the right is equal to the last term of \eqref{lemma 4.1 primi conti}, while the sum on the left differs from that in \eqref{lemma 4.1 primi conti} by the fact that we sum on a smaller set. Hence, we have to bound
\begin{equation*}
\sideset{}{'}\sum_{e\in\mathbb{Z}^m}\sum_{i=1}^m e_i \log{n_i},
\end{equation*}
where $\sum'$ means we are summing on the vectors $e\in\mathbb{Z}^m$ such that 
\begin{equation*}
\log{\frac{\sqrt{d}}{2}}- S<\sum_{i=1}^m e_i \log{n_i}\leq \log{\frac{\sqrt{d}}{2}}.
\end{equation*}
The number of such vectors is bounded from above by 
\begin{equation*}
\mathrm{vol}(\mathcal{G}(\log{\sqrt{d}/2+S}))-\mathrm{vol}(\mathcal{G}(\log{\sqrt{d}/2-S}))=O\left((\log{d})^{m-1}\right)
\end{equation*} 
and therefore
\begin{equation*}
\sideset{}{'}\sum_{e\in\mathbb{Z}^m}\sum_{i=1}^m e_i \log{n_i}=O\left((\log{d})^m\right).
\end{equation*}
The proof of the lemma is complete.
\end{proof}

\begin{rmk}
In \cite{H-K} Halter-Koch announced to have found a family of discriminants $d$ with regulator $\gg(\log{d})^4$, i.e. 
\begin{equation}\label{d log4}
d=(lp^kq+c)^2+4p^kq
\end{equation}
with $(q,c)=1$, $l=1$, $p=rs$ and $q=tp+r$ such that $t\geq1$, $r\geq2$, $s\geq2$ and $rs$, $s(ts+1)$, $r(ts+1)$ are multiplicatively independent.
The idea was to apply his formulation of Theorem \ref{HKY criterion} with $m=3$, but the critical step in his proof consists in showing that the principal ideals in the set $\Omega$ are reduced: the condition that the norms are less than $\sqrt{d}/2$ is sufficient if they are primitive, but the product of two primitive ideals is not always primitive (see \cite[Theorem 5.4.6]{H-K book}). Indeed, as we will explain below, the hypotheses he chose in \cite[§3]{H-K} are not enough to guarantee this. Moreover, we could not say if \eqref{d log4} satisfies the hypotheses of Theorem \ref{HKY criterion} for $m=3$.

In his proof Halter-Koch considered the ideals 
\begin{align*}
&\frak{n}_1=\left[ rs, \frac{p^kq-c+\sqrt{d}}{2} \right], \quad \frak{n}_2=\left[ r(ts+1), \frac{p^kq+c+\sqrt{d}}{2} \right],\\
&\frak{n}_3=\left[ s(ts+1), \frac{p^kq+p+1-2s(ts-t+1)+\sqrt{d}}{2} \right].
\end{align*}
The computations in the proof of \cite[Proposition 1]{H-K} show, for instance, that $\frak{n}_1$ and $\frak{n}_2$ are principal reduced and 
\begin{equation*}
\frak{n}_1\frak{n}_2=r \left[ A, \frac{B+\sqrt{d}}{2} \right]
\end{equation*}
for suitable integers $A,B$. Despite by \cite[Theorem 5.4.6]{H-K book} $N(\frak{n}_1\frak{n}_2)<\sqrt{d}/2$, the ideal $\frak{n}_1\frak{n}_2$ is not primitive. This observation tells us that in the definition of $\Omega$ the products containing $\frak{n}_1\frak{n}_2$ must be excluded. Even if we include all other products, i.e. 
\begin{equation*}
\Omega:=\left\{ \prod_{i=1}^3\frak{n}_i^{e_i}| e_i\geq0,\ e_1 e_2=0,\  \prod_{i=1}^m n_i^{e_i}<\frac{1}{2}\sqrt{d} \right\},
\end{equation*}
the lower bound on the regulator decreases to $\gg(\log{d})^3$ instead of $\gg(\log{d})^4$, and it drops to $\gg(\log{d})^2$ if there are not mixed products but only powers of single ideals. In order to justify this, we decompose $\Omega$ as a union of subsets according to the different kind of products one gets.
Then, respectively for $j=1,2,3$ and $(j,k)=(1,3),(2,3)$, we define
\begin{align*}
\Omega_j&:=\left\{ \frak{n}_j^{e_j}\mid e_j\geq0,  n_j^{e_j}<\frac{1}{2}\sqrt{d} \right\}\\
\Omega_{j,k}&:=\left\{ \frak{n}_j^{e_j}\frak{n}_k^{e_k}\mid e_j,e_k\geq0, n_j^{e_j} n_k^{e_k}<\frac{1}{2}\sqrt{d} \right\}.
\end{align*}
Our assertion is proved if we can show that $\Omega_j$ and $\Omega_{j,k}$ give a contribution to the growth of $\xi_d$ respectively of order $(\log{d})^2$ and $(\log{d})^3$, which follows from the computations of the last part of the proof of Theorem \ref{HKY criterion}.
In other words, the products of $m$ ideals, $m=3$ in this case, fill the volume of the $m$-simplex in $\mathbb{R}^m$, while products of fewer ideals lie on the faces; thus they give a smaller contribution.
Therefore, in \cite[Proposition 1 (ii)]{H-K} we can have at best $\gg (\log{d})^3$ and we can in fact drop to $\gg (\log{d})^2$ if we choose the parameters so that $(r,ts+1)\neq1$ and $(r,s)\neq1$.
\end{rmk}

\section*{Acknowledgments}
The author would like to thank his supervisor Giacomo Cherubini for his guidance, as well as for many helpful discussions and comments during the preparation of this article.

%BIBLIOGRAFIA
\addcontentsline{toc}{chapter}{Bibliografia}

\end{document}